\begin{document}
\title[Hida theory without ordinary locus]{Hida theory over some unitary Shimura varieties without ordinary locus}


\author{Riccardo Brasca}
\email{\href{mailto:riccardo.brasca@imj-prg.fr}{riccardo.brasca@imj-prg.fr}}
\urladdr{\url{http://www.imj-prg.fr/~riccardo.brasca/}}
\address{Institut de Math\'ematiques de Jussieu-Paris Rive Gauche\\
Universit\'e Paris Diderot\\
Paris\\
France}
\thanks{RB was partially supported by the ANR PerCoLaTor: ANR-14-CE25-0002-01.}

\author{Giovanni Rosso}
\email{\href{mailto:giovanni.rosso@concordia.ca}{giovanni.rosso@concordia.ca}}
\urladdr{\url{https://sites.google.com/site/gvnros/}}
\address{Concordia University, Departments of Mathematics and Statistics,
Montr\'eal, Qu\'ebec, Canada}
\thanks{Most of this work has been carried out while GR was Herchel Smith Postdoctoral Fellow at Cambridge University and Supernumerary Fellow at Pembroke College.}

\subjclass[2010]{Primary: 11F33; Secondary: 11F55} 

\keywords{Hida theory, $p$-adic modular forms, PEL-type Shimura varieties} 

\begin{abstract}
We develop Hida theory for Shimura varieties of type~A without ordinary locus. In particular we show that the dimension of the space of ordinary forms is bounded independently of the weight and that there is a module of $\Lambda$-adic cuspidal ordinary forms which is of finite type over $\Lambda$, where $\Lambda$ is a twisted Iwasawa algebra.
\end{abstract}

\maketitle

\section*{Introduction}

In the historical paper \cite{HidaENS}, Hida firstly constructed $\m Z_p\llbracket T \rrbracket$-adic families of ordinary eigenforms. Since then a lot of work has been done in many directions, both relaxing the ordinary hypothesis and considering more general automorphic forms. The state-of-the-art result concerning families of ordinary forms is \cite{hida_control}, where the author shows that one can do Hida theory for all algebraic groups whose associated Shimura variety has a non-empty ordinary locus. Over the years Hida theory has found a lot of spectacular applications in number theory, ranging from proving cases of Mazur--Tate--Teitelbaum conjecture \cite{GreenbergStevens} to  modularity lifting theorems \cite{BLGGT}. It would be hence extremely useful to have Hida theory in the greatest generality as possible;  the aim of this paper is to complete Hida's construction in the case of Shimura varieties of type~A under the mild assumption that $p$ is unramified in the Shimura datum.

Let us be more precise. In \cite{hida_control} Hida axiomatised his approach to the study of ordinary families of $p$-adic modular forms. His machinery requires a certain set of conditions  to be verified, such as:
\begin{itemize}
 \item the rank of the ordinary part of the space of weight $\kappa$ modular forms is bounded independently of $\kappa$;
 \item the existence of a Hasse invariant whose non-vanishing locus is the ordinary locus and which commutes with the $\U_p$ operators;
 \item a space of $p$-adic modular forms which contains as a dense subset all the classical modular forms, of all weights and all $p^n$-levels.
\end{itemize}

Of course he had in mind the standard notion of $p$-adic forms, \emph{i.e.} forms defined over the ordinary locus, but in the axiomatic approach what one needs is just a huge space of $p$-adic modular forms where one can embed the classical forms. Usually one takes the space of functions on the Igusa tower, a space that parametrises trivializations of the relevant $p$-divisible group over the ordinary locus. Hida was able to show that all the assumptions of his theory are verified in this case and so he obtained the theory for general PEL type Shimura varieties with ordinary locus. In the paper at hand we want to generalise Hida theory to Shimura varieties of PEL type without ordinary locus using the same machinery.

The first step is to replace the ordinary locus with the more general $\mu$-ordinary locus, that is always dense in the (reduction modulo $p$ of the) Shimura variety. The first two conditions can be checked thanks to the work on (reduced) $\mu$-ordinary Hasse invariants \cite{WushiMH, Valentinhasse}. What we still need is an analogue of the Igusa tower that let us define the space of $p$-adic modular forms. Of course in general the universal $\mu$-ordinary abelian scheme is not ordinary, but we have a good understanding of (the Dieudonné module of) his $p$-divisible group: it comes with a natural filtration whose graded pieces are generalised Lubin--Tate groups determined by the PEL data (more precisely by the signature at infinity of the unitary group). If the ordinary locus is not empty it is then equal to the $\mu$-ordinary locus and this graded pieces are actually the connected-multiplicative and the étale part of the $p$-divisible group. To define the usual Igusa tower, one looks at the automorphisms of the graded pieces that moreover respect the polarisation, so in practice one takes the automorphisms of the connected component of the universal ordinary $p$-divisible group. We do a similar thing: we take for the Igusa tower the automorphisms of the graded pieces, but \emph{we ignore the polarisation} and {\it we discard the \'etale part} (as, being \'etale, its module of invariant differentials is zero). This choice has two main effects:
\begin{itemize}
 \item our Igusa tower can be extended to a fixed toroidal compactification of the Shimura variety. Note that this would not be possible if we had taken into account the polarisation, as the universal semiabelian scheme is not polarised. We view this possibility of extending the Igusa tower to the compactification as one of the main reasons to justify our choice;
 \item the Igusa tower seems to be `too big', in the sense that its Galois group has too many  $p$-adic characters (because we want to take these characters as the weights of our modular forms and we know the number of components a weight must have from the complex theory).
\end{itemize}
To fix this issue, our idea is to consider only characters that are {\it locally analytic} in a suitable sense. Roughly speaking, it means that the action of the character on the Lie algebra of $\m C^\ast_p$ must have the same signature as the action on the Lie algebra of the Dieudonné module of the relevant graded pieces.

Let us be more explicit: we fix an unramified extension $\mc O$ of $\m Z_p$ and a type $(d,\mathfrak{f})$ in the sense \cite{Moonen}. The  Dieudonné module $M$ of the $\mu$-ordinary Barsotti--Tate with $\mc O$-structure of type $(d,\mathfrak{f})$  is a free $\mc O \otimes_{\m Z_p} \mc O$-module of rank $h$ (where $h$ depends on the type). It comes with a sub-$\mc O$-module  $\mr{Fil}^1$ that (and here is the big difference from the classical cases of Hida theory) is \emph{not} a free $\mc O \otimes_{\m Z_p} \mc O$-module, unless $M$ is ordinary in the usual sense. 

Suppose moreover that $h=1$ (this is what we call a generalised Lubin--Tate).  Classically, a $p$-adic weight would be a character of $\mr{Hom}_{\mr{cont}}(\mc O^\ast, \m C^\ast_p)$. But, given the set of all $\sigma:\mc O \rightarrow \mc O_{\m C_p}$, the action of $\mc O$ on $\mr{Fil}^1$ is via a subset of these $\sigma$'s. Our $p$-adic weights are defined as continuous characters of $\mc O^\ast$ which are $\sigma$-analytic for the $\sigma$'s appearing in the action of $\mc O$ over $\mr{Fil}^1$. (See Subsection~\ref{subsec: weight space} for the precise definition.)

We can then show that in this way we get characters (and hence weights) with the expected number of components. Note that in this case our weight space $\mc W$ and its Iwasawa algebra $\Lambda$ will be a twisted version of the usual weight space and Iwasawa algebra.

Thanks to a variant of the Hodge--Tate map, we prove in Proposition \ref{prop:classicaltopadic} that classical modular forms can be interpreted as functions on the Igusa tower, see Section \ref{sec:HodgeTate} for more details. We then define $V^{\mr{cusp}}$, a subspace of the functions on the Igusa tower where classical cuspidal forms are dense, and we check that all the conditions we need hold, assuming that one has a reduced $\mu$-ordinary Hasse invariant. The main theorem, stating the existence of cuspidal Hida families and a control result, is the following:
\begin{teono}
Let $G$ be a unitary group satisfying Assumption \ref{ass: p-adic}.  We have constructed:
\begin{enumerate}
\item An  ordinary projector $e_G=e_G^2$  on $V^{\mr{cusp}}$ such that the Pontryagin dual of its ordinary part 
\begin{equation*}
V^{\mr{cusp},*,\mr{ord}}\colonequals \mr{Hom}_{\m Z_p}\left(V^{\mr{cusp},\mr{ord}},\m Q_p/\m Z_p\right)
\end{equation*}
(which is naturally a $\Lambda$-module) is finite free over $\Lambda^{\circ}$.
\item The $\Lambda$-module of cuspidal Hida families $\mc S \colonequals \mr{Hom}_{\Lambda}\left(V^{\mr{cusp},*,\mr{ord}},\Lambda\right)$, which is of finite type over $\Lambda$.
\item  Given an algebraic weight $\kappa$ for $\Lambda$, let $\mc P_{\kappa}$ be the corresponding prime ideal of $\Lambda$. Then
\begin{equation*}
   \mc S\otimes \Lambda/\mc P_{\kappa}\stackrel{\sim}{\longrightarrow}\varprojlim_m\varinjlim_l V^{\mr{cusp}, \mr{ord}}_{m,l}[\kappa],
\end{equation*}
and, if $\kappa$ is very regular, combining it with \eqref{eqn:classicaltopadic} gives
\begin{equation*}
   {\mr S_{\kappa}(\mc H, K)}^{\mr{ord}} \stackrel{\sim}{\longrightarrow} \left(\mc S \otimes\Lambda /\mc P_{\kappa}\right)[1/p].
\end{equation*}
Here the maps are equivariant under the action of the unramified Hecke algebra away from $Np$ and the $\m U_p$-operators.
\end{enumerate}
\end{teono}
Assumption \ref{ass: p-adic} is simply to ensure that $p$ `does not ramify' in the Shimura datum.

A lot of people have recently been interested in the study of families of $p$-adic modular forms whenever the ordinary locus is empty. In \cite{EischenMantovan} Eischen and Mantovan develop and study extensively a theory of $p$-adic modular forms in this setting, using a slightly different Igusa tower. They also prove many results that we have not covered, and among these we quote the problem of density of classical modular forms in the space of $p$-adic modular forms, the construction of differential operators in the spirit of Serre's $\theta$ operator, and the construction of explicit families of $p$-adic modular forms. In \cite{ValentinU21} Hernandez constructs an eigenvariety for $\mr U(2,1)$ over a quadratic imaginary fields where $p$ is inert and, using the results of \cite{Valentinfiltration}, his method is very likely to generalise to all the cases considered in this paper. 

We remark that there exist Shimura varieties of type~D and PEL-type. In many case the ordinary locus is not empty and these cases have been dealt with in \cite{hida_control} but there are a few cases when the ordinary locus is empty. The situation in this case is even simpler then the one in our paper, as the $\mu$-ordinary $p$-divisible group is the product of a multiplicative part, an étale part, and  a fixed generalised Lubin--Tate \cite[\S 3.2.3]{Moonen}. The same method of this paper easily generalises to this setting, but for sake of brevity and exposition we shall not treat it. 

Our approach to $\mu$-ordinary $p$-divisible groups and, in general, a lot of our constructions are very `linear algebra-style'. This is done on purpose as we are very confident that the methods of the paper can be generalised to all Shimura variety with a flat and surjective map to the stack of $G$-Zip, such as Hodge type Shimura varieties (in whose context one has the work of Goldring--Koskivirta \cite{GoldringK1} and Zhang \cite{ZhangGzip}). This will be the subject of a future paper.

The plan of the paper is the following. In Section \ref{sec:Shimuradatum} we fix some notations for Shimura varieties and Barsotti--Tate groups with $\mc O$-structure, in Section \ref{sec:Igusatower} we define an Igusa tower $\mr{Ig}$ and compare classical forms with functions on $\mr{Ig}$. In Section \ref{sec:Heckeoperators} we define two ordinary projectors: $e_{\mr{GL}}$,  of representation theoretic flavor and which will be used to prove the control theorem, and $e_G$, of geometric nature and that will serve to control the rank of ordinary parts. In Section \ref{sec:Hidatheory} we put everything together to construct the module of cuspidal Hida families. Finally, in the appendix, we construct reduced mod $p$ Hasse invariants, using a remark of Hernandez.
\subsection*{Acknowledgments} It will be evident to the reader how much we owe to Haruzo Hida and his mathematics. We thank Vincent Pilloni for suggesting to one of us the problem and the idea for the Igusa tower and Valentin Hernandez for explaining us his constructions and for his feedback on the present paper. We also thank, for useful discussions, Wushi Goldring, Marc-Hubert Nicole, and Jacques Tilouine and all the participants of the reading group on Hida theory held in Paris 13 in 2013-14. Finally, we would to thank Giacomo Graziani and Toby Gee for pointing an error in a previous definition of our Iwasawa algebra.

\section{\texorpdfstring{Shimura datum and $\mu$-ordinary locus}{Shimura datum and mu-ordinary locus}}\label{sec:Shimuradatum}
In this section we introduce the Shimura varieties we will work with. These will be Shimura varieties of PEL type, associated with certain unitary groups.

\subsection{PEL data} \label{subsec: PEL data} Let $F_0$ be a totally real number field and let $F$ be a totally imaginary quadratic extension of $F$ ({\it i.e.}\ a CM field). Let $d$ be $[F_0 : \Q]$ and let $\tau_1,\ldots,\tau_d$ be the various embeddings $F_0 \hookrightarrow \mathbb{R}$. We choose once and for all a CM-type for $F$, {\it i.e.}\ we choose $\sigma_1,\ldots, \sigma_d$ embeddings $F \hookrightarrow \C$ such that $\sigma_{i|F_0} = \tau_i$. In particular, $\Hom(F,\C)= \set{\sigma_i, \bar \sigma_i}_i$, where $\bar \cdot$ is the complex conjugation.

Let $\m V \neq 0$ be an $\mc O_F$-lattice equipped with a nondegenerate bilinear pairing
\[
\langle \cdot, \cdot \rangle \colon \m V \times \m V \to \Z.\\
\]
We assume that $\langle \cdot, \cdot \rangle$ is symplectic with respect to $\mc O_F$, in the sense that it is alternating and moreover
\[
\langle \gamma x,y \rangle = \langle x, \overline\gamma y \rangle
\]
for all $\gamma \in \mc O_F$ and $x,y \in \m V$. We set $V \colonequals \m V \otimes_{\Z} \Q$. Note that the existence of the pairing $\langle \cdot, \cdot \rangle$ forces $\dim_{\Q}(V)$ to be even.
\begin{rmk} \label{rmk: basic situation}
Usually, the above data always arise from an Hermitian form as follows (if $F_0 = \Q$ this is always the case). Recall that a Hermitian form
\[
\Psi(\cdot, \cdot) \colon \m V \times \m V \to \mc O_F,
\]
is an $\mc O_{F_0}$-bilinear form that is $\mc O_F$-linear in the first variable and such that
\[
\Psi(y, x) = \overline{\Psi(y, x)}.
\]
Suppose we are given such a $\Psi$ that is nondegenerate. To obtain a pairing as above, we fix a totally negative element $\alpha \in \mc O_F$, so $\alpha$ is an algebraic integer such that $F=F_0(\sqrt{\alpha})$. Then, the $\mc O_{F_0}$-bilinear pairing
\begin{gather*}
\langle \cdot, \cdot \rangle \colon \m V \times \m V \to \Z\\
(x,y) \mapsto \Tr_{F/\Q} (\alpha\Psi(x,y))
\end{gather*}
is nondegenerate and symplectic with respect to $\mc O_F$.
\end{rmk}
\begin{rmk} \label{rmk: more general}
More generally one can start with a simple algebra $B$ over $\Q$, with center $F$ and a positive involution $^\ast \colon B \to B$ such that $[F:F_0] = 2$, where $F_0$ is the subfield of $F$ fixed by $^\ast$. This the so called Case (A) of \cite{pel}. (One need to use certain Morita equivalences in this case.) The above situation corresponds to $B=F$ and $\cdot^\ast = \overline{ \cdot }$.
\end{rmk}
Our choice of $\sigma_i$ gives an isomorphism $F \otimes_{F_0,\tau_i} \mathbb{R} \cong \C$ and, thanks to the symplectic pairing $\langle \cdot, \cdot \rangle$, the $\mathbb{C}$-vector space $V \otimes_{F_0,\tau_i} \mathbb{R}$ inherits a skew Hermitian form. We denote with $(a_i,b_i)$ its signature. (If we are in the case of Remark~\ref{rmk: basic situation}, these are the signatures of the Hermitian form $\Psi$.) Note that $a_i+b_i$ does not depend on $i$ and it is equal to $n\colonequals \frac{\dim_{\Q}(V)}{2d}$. Up to renaming the embeddings of $F_0$ into $\mathbb{R}$ we may (and we actually do) assume that
\[
a_1 \leq a_2 \leq \cdots \leq a_d \leq b_d \leq \cdots \leq b_2 \leq b_1.
\]

Let $G$ be the algebraic group scheme over $\Z$ given by the symplectic similitudes of $\m V$, i.e\ for all ring $R$ we have, functorially on $R$,
\[
G(R) = \set{(g, \lambda) \in \GL(\m V \otimes_{\Z} R) \times R^\ast \mbox{ such that } \langle gx,gy \rangle = \lambda \langle x,y\rangle \mbox{ for all } x,y \in \m V}.
\]
Note that $\lambda$ is uniquely determined by $g$, so we will usually drop it from the notation. Also, we have a morphism
\begin{gather*}
\nu \colon G \to \mathbb{G}_{\mathrm{m}} \\
(g,\lambda) \mapsto \lambda.
\end{gather*}
We have that $G_{\Q}$ is a connected reductive algebraic group and by construction there is an isomorphism
\[
G_{\mathbb{R}} \cong \mathrm{G}\left(\prod_{i=1}^d \U(a_i,b_i)_{\mathbb{R}} \right),
\]
where $\U(a_i,b_i)_{\mathbb{R}}$ is the real unitary associated to the standard Hermitian form of signature $(a_i,b_i)$. Here the notation $\mathrm{G}(\cdot)$ means that the similitude factors for the various embeddings $F_0 \hookrightarrow \mathbb{R}$ match.

We now fix a \emph{polarisation} of $(\m V, \langle \cdot,\cdot \rangle)$, i.e.\ an $\mathbb{R}$-algebra homomorphism
\[
h \colon \C \to \End_{F \otimes_{\Q} \mathbb R}(V \otimes_{\Q} \mathbb{R})
\]
such that, for all $z \in \C$ and all $x, y \in V \otimes_{\Q} \mathbb{R}$, we have:
\begin{itemize}
 \item $\langle h(z)x,y, \rangle = \langle x, h(\overline{z}y)\rangle$;
 \item the pairing $(x,y) \mapsto \langle x,h(i)y \rangle$ is symmetric and positive definite.
\end{itemize}
This is the same as giving an homomorphism $\mathbb{S} \to G_{\mathbb{R}}$, where $\mathbb{S}$ is the Deligne torus, and it induces a complex structure on $V \otimes_{\Q} \mathbb{R}$. We then have a decomposition
\[
V \otimes_{\Q} \C \cong V_{\C,1} \oplus V_{\C,2},
\]
where $h(z)$ acts on $V_{\C,1}$ via multiplication by $z$ and via multiplication by $\bar z$ on $V_{\C,2}$. There is an isomorphism of $F \otimes_{\Q} \mathbb{R} \cong \C^{d}$-modules
\[
V_{\C,1} \cong \prod_{i=1}^d \C^{a_i} \oplus \prod_{i=1}^d \overline \C^{b_i},
\]
where $\C$ acts by multiplication on the first factor and by multiplication by the conjugate on the second factor. The reflex field $E$ is by definition the field of definition of the isomorphism class of the complex $F$-representation $V_{\C,1}$.

\subsection{\texorpdfstring{The $p$-adic setting}{The p-adic setting}} \label{subsec: p-adic sett} We let $p \neq 2$ be a fixed prime number. We fix once and for all embeddings $\overline \Q \hookrightarrow \C$ and $i_p \colon\overline \Q \hookrightarrow \overline \Q_p$. We denote with $\mc P$ the corresponding prime ideal of $\mc O_E$ above $p$ and we write $E_{\mc P}$ for the $\mc P$-adic completion of $E$.
\begin{ass} \label{ass: p-adic}
From now on we assume that:
\begin{itemize}
 \item $p$ is unramified in $F_0$ and we denote by $\pi_1, \ldots, \pi_k$ the primes of $\mc O_{F_0}$ above $p$;
 \item the restriction of $\langle \cdot, \cdot \rangle$ to $\m V_p$ gives a perfect pairing with values in $\Z_p$.
\end{itemize}
\end{ass}
We write $d_i$ for the residue degree of $\pi_i$, so $d_1 + \cdots + d_k = d = [F_0 : \Q]$ and the completion of $\mc O_{F_0}$ at $\pi_i$ is isomorphic to $\Z_{p^{d_i}}$. In particular we have
\[
\mc O_{F_0,p} = \prod_{i=1}^k \Z_{p^{d_i}}.
\]
Note that $\mc O_{F,p}$ depends on which primes above $p$ in $\mc O_{F_0}$ are inert in $\mc O_F$: if $\pi_i$ is split in $\mc O_F$ we will write $\pi_i = \pi_i^+ \pi_i^-$. Then we have
\[
\mc O_{F,p} = \prod_{\pi_i \text{ split}} \left ( \Z_{p^{d_i}} \times \Z_{p^{d_i}} \right ) \times \prod_{\pi_i \text{ inert}} \Z_{p^{2d_i}},
\]
where the decomposition corresponding to split primes is obtained accordingly to $\pi_i = \pi_i^+ \pi_i^-$ and we have fixed an identification $\Z_{p^{d_i}} = \Hom_{\Z_p}(\Z_{p^{d_i}}, \Z_p)$.
\subsection{Shimura varieties} \label{subsec: shim var} Let $\mc H \subset G(\mathbb{A}^p)$ be a compact open subgroup that we assume to be \emph{sufficiently small}, fixed from now on. (To be precise we need $\mc H$ to be \emph{neat} in the sense of \cite[Definition~1.4.1.8]{lan}.) We are interested in the functor
\[
X \colon \mbox{ locally noetherian } \mc O_{E_{\mc P}}-\mbox{schemes} \to \mathbf{set}
\]
that to $S$ associates the isomorphism classes of the following data:
\begin{enumerate}
 \item an abelian scheme $A/S$ (of dimension $nd=\frac{\dim_{\Q}(V)}{2}$);
 \item a polarisation $\lambda \colon A \to A^\vee$ of degree prime to $p$;
 \item an action $\iota \colon \mc O_F \to \End_S(A)$ on $A/S$ such that $\lambda \circ \iota(\bar x) = \iota(x)^\vee \circ \lambda$ and $\det_{\Lie(A)}$ equals $\det_{V_{\C,1}}$ as polynomials with coefficients in $\mc O_S$;
 \item a $\mc H$-level structure $\alpha$ in the sense of \cite[Definition~1.3.7.6]{lan}.
\end{enumerate}
We furthermore require the usual determinant condition of Kottwitz, see \cite[Definition~1.3.4.1]{lan}. In our particular case, this last condition can be formulated in a simple way: it is equivalent to assume that $\Lie(A)$ is isomorphic, Zariski locally on $S$, to the $\mc O_S \otimes_{\Z_p} \mc O_{F,p}$-module
\[
\mc O_{F,p} = \mc O_S \otimes_{\Z_p} \left( \prod_{\pi_i \text{ split}} \left( \Z_{p^{d_i}}^{a_i} \oplus \Z_{p^{d_i}}^{b_i} \right) \times \prod_{ \pi_i \text{ inert}} \Z_{p^{2d_i}}^{a_i+b_i} \right).
\]
It is known (see for example \cite[Theorem~1.4.1.11 and Corollary~1.4.1.12]{lan}) that the functor $X$ is representable by a quasi-projective scheme over $\Sp(\mc O_{E_{\mc P}})$, denoted again by $X$. Thanks to the unramifiedness assumption, $X$ is smooth over $\Sp(\mc O_{E_{\mc P}})$. We have that $\dim(X) = \sum_{i=1}^d a_i b_i$.
We now let $K$ be a finite extension of $\Q_p$, that we assume to be `sufficiently big' (the meaning of this will change during the paper without any further comment). We base change $X$ to $\mc O_K$, using the same notation.
\subsection{Shimura varieties of Iwahoric level} \label{subsec: Iw level} Let $\mc A$ be the universal abelian scheme over $X$. Using the action of $\mc O_F$ on the $p$-divisible group associated to $\mc A$, we have a decomposition
\[
\mc A[p^\infty] = \prod_{ \pi_i \text{ split}} \left ( \mc A[(\pi_i^+)^\infty] \oplus \mc A[(\pi_i^-)^\infty] \right ) \times  \prod_{\pi_i \text{ inert}} \mc A[\pi_i^\infty]  ,
\]
where $\mc A[(\pi_i^-)^\infty]$ is canonically identified with the Cartier dual of $\mc A[(\pi_i^+)^\infty]$. We have an action of $\Z_{p^{d_i}}$ on $\mc A[(\pi_i^+)^\infty]$ and on $\mc A[(\pi_i^-)^\infty]$, and are both Barsotti--Tate groups of height $d_i(a_i+b_i)= d_i \frac{\dim_{\Q}(V)}{2d}$. The first one has dimension $d_ia_i$ and the second one has dimension $d_ib_i$. If $\pi_i$ is inert we have an action of $\Z_{p^{2d_i}}$ on $\mc A[(\pi_i)^\infty]$, that is an autodual $p$-divisible group of dimension $d_i(a_i+b_i)$. 
\begin{defi} \label{defi: Iw}
Let $X_{\Iw} \to \Sp(\mc O_K)$ be the functor
\[
X_{\Iw} \colon \mbox{ locally noetherian } \mc O_{K}-\mbox{schemes} \to \mathbf{set}
\]
that to $S$ associates the isomorphism classes of the following data:
\begin{enumerate}
 \item a point $(A, \lambda, \iota, \alpha) \in X(S)$;
 \item for all $i$ such that $\pi_i$ is split, a filtration
\[
0 = H_{i,0} \subset H_{i,1} \subset \cdots \subset H_{i, a_i+b_i} = A[\pi_i^+],
\]
where each $H_{i,j}$ is a finite and flat subgroup of $A[\pi_i^+]$, stable under the action of $\Z_{p^{d_i}}$ and of height $d_ij$;
 \item for all $i$ such that $\pi_i$ is inert, a filtration
\[
0 = H_{i,0} \subset H_{i,1} \subset \cdots \subset H_{i, a_i+b_i} = A[\pi_i],
\]
where $H_{i,j}$ is a finite and flat subgroup of $A[\pi_i]$, stable under the action of $\Z_{p^{2d_i}}$ and of height $2d_i j$. 
\end{enumerate}

This functor is representable by a scheme, denoted again $X_{\Iw}$ and there is a proper morphism $X_{\Iw} \to X$.
\end{defi}
\begin{rmk} \label{rmk: orth}
Let $(A, \lambda, \iota, \alpha, (H_{i,\bullet})_i)$ be a point of $X_{\Iw}$ and let $i$ be such that $\pi_i$ is split. Taking the orthogonal with respect to the perfect pairing given by Cartier duality between $A[\pi_i^+]$ and $A[\pi_i^-]$, we have that $H_{i,\bullet}$ induces an analogous filtration on $A[\pi_i^-]$, so the choice of working with $A[\pi_i^+]$ is harmless.
\end{rmk}

\subsection{Compactifications}
We fix one and for all a smooth toroidal compactification $X^{\tor}$ of $X$. It comes with the universal semi-abelian scheme $\mc G \to X^{\tor}$ and admits a stratification $ \bigsqcup_W X^{\mr{tor}}_W$, where $W$ ranges among $\mc H$-equivalence classes of  co-torsion-free sub-modules of $\m V$. We denote by $X^*$ the a minimal compactification  and by $\pi$ the proper map $\pi: X^{\mr{tor}}\rightarrow X^*$.

We let $S$ be the $\mu$-ordinary locus consisting of points whose associated $p$-divisible group is $\mu$-ordinary in the sense of \cite{Moonen}. We write $S^*$ for the $\mu$-ordinary locus of $X^*$ and $S^{\mr{tor}}$ for the counter-image of $S^*$ inside $X^{\tor}$. We denote by $\mr{Ha}^{\mu}$ a Hasse invariant for $G$. It is a modular form modulo $p$ whose non-vanishing locus is the $\mu$-ordinary locus. We shall sketch in the next section the idea behind its construction.

For a variety $Y$ over $W(k)$, we denote by $Y_m\colonequals Y \times_{\mr{Spec}(W(k))} \mr{Spec}(W(k)/p^m)$ and by $Y_{\infty}$ the corresponding formal scheme.

We conclude with three examples of algebraic group $G$ (and hence of Shimura variety $X$) that in our opinion the reader should keep in mind.
\begin{esempio}\label{esempio1}
We suppose $F_0 =\m Q$ and $F$ a quadratic imaginary field where $p$ is inert and that $G$ at infinity is $\mr {GU}(1,2)$. 
\end{esempio}
\begin{esempio}\label{esempio2}
Generalising the previous example, we suppose $F_0$ is a totally real field of degree $d$ and $F$ a CM where $p$ is inert, and that $G$ at infinity is $\mr G(\U(1,2d)\times \U(2,2d-1)\times \ldots \times \U(d,d+1))$. 
\end{esempio}
\begin{esempio}\label{esempio3}
We suppose $F_0$ is a totally real field of degree $2$ where  $p$ is inert and $F$ a CM where $p$ splits, and that $G$ at infinity is $\mr G(\U(1,4)\times \U(2,3))$. 
\end{esempio}

\subsection{Dieudonn\'e theory} \label{subsec: dieud}
In this section we recall the theory of $\mu$-ordinary $p$-divisible groups following  \cite{Moonen,Bijamu}. 
We fix an unramified extension $\m Q_{p^f}/\Q_p$ of degree $f$. We write $\mc O = \m Z_{p^f}$ for its valuation ring, $D$ for its Galois group and $\mr{Fr}$ for the Frobenius of $D$. Fix also a finite field $k$ such that $W(k)$, that will be our base ring, contains $\mc O$. We denote by $I$ the set of embeddings $\mc O \rightarrow W(k)$.

For a fixed integer $h$ we define $M$ to be a free $W(k)$-module  of rank $fh$ with basis $(e_{\sigma,i})_{\sigma \in I,i=1,\ldots,h}$ and we also fix $\eps: I^h \rightarrow \set{0,1}$ (this is equivalent to the map $\mathfrak{f} :I \rightarrow \set{0,1,\dots,h}$ in \cite{Moonen}). We define a Frobenius operator $F$ and a Verschiebung $V$ on $M$
\begin{align*}
F e_{\sigma,i}=p^{\eps(\sigma,i)}e_{\mr{Fr}\sigma,i} \;\:\;\: Ve_{\sigma,i}=p^{1-\eps(\sigma,i)}e_{\mr{Fr}^{-1}\sigma,i}.
\end{align*}
Clearly $FV=VF=p$. For all $i$, we let $\mc O$ act on $e_{\sigma,i}$ via the $\sigma$-embedding. Hence $M$ is a free $\mc O \otimes_{\Z_p}W(k)$-module of rank $h$. Via Dieudonn\'e equivalence, this defines a Barsotti--Tate group with $\mc O$-action of ($\mc O$-)height $h$ that shall be denote by $\mr{BT}_{\eps}$.

Take $h=1$ and identify $I$ with $\set{1,\ldots,f}$; we will sometimes write $\eps$ by writing the the images of the elements $\set{1,\ldots,f}$. We shall write $\eps^{(i)}$ for the map $\eps^{(i)}$ such that $\eps^{(i)}(j)=1$ if  $j \geq i+1 $ and $0$ otherwise.
For example, we have:
\begin{gather*}
\mr{BT}_{\eps^{(0)}} = \mr{BT}_{(1,1,\ldots,1)} = \mu_{p^{\infty}} \otimes_{\Z_p} \mc O; \\
\mr{BT}_{\eps^{(f-1)}} = \mr{BT}_{(0,\ldots,0,1)} = \mc{LT}_{\mc O}[p^{\infty}]; \\
\mr{BT}_{\eps^{(f)}} = \mr{BT}_{(0,\ldots,0)} = K/\mc O \;(\mbox{or }\Q_p/\Z_p \otimes_{\Z_p} \mc O).
\end{gather*}
If $\mc D$ denotes Cartier duality for $p$-divisible groups, note that $\mc D(\mr{BT}_{\eps})=\mr{BT}_{1-\eps}$. 
In general, we shall denote by $\mc{LT}_i$ the BT with $\mc O$-structure $\mr{BT}_{\eps^{(i)}} $ and call it  generalised Lubin--Tate. Note that a generalised Lubin--Tate is rigid, in the sense that there is a unique deformation in characteristic zero \cite[Corollary 2.1.5]{Moonen}.

We write $\mr{Fil}^1(M)$ for the $\mc O$-module spanned by the $e_{\sigma,i}$ for which $F(e_{\sigma,i}) \equiv 0 \bmod p$. (Note that it is not a free $\mc O \otimes_{\Z_p} W(k)$-module!)

For simplicity, we suppose now that $p$ is inert in the totally real field $F_0$ and we write $\mc O$ for the $p$-adic completion of its ring of integers. We shall say that we are in case ({U}) if $p$ is inert also in $F$ and in case ({L}) if $p$ splits. Recall that we fixed a unitary group with signature at infinity $(a_i,b_i)$. We normalise the $a_i$'s and $b_i$'s so that $a_1 \leq a_2\leq \cdots \leq a_d \leq b_d \leq \ldots \leq b_1$; this induces an isomorphism of $I$ with $\set{1,\ldots,f}$ that it will be fixed for the rest of the section. We also write $\sigma_i$ for the embedding in $\mr{Hom}(\mc O_{F} \otimes \m Z_p, \C_p)$ corresponding to $a_i$ and $\bar \sigma_i$ for the one corresponding to $b_i$. We let $n=a_1+b_1=a_i+b_i$. For the rest of the subsection, let  $\mc A^{\mu}$ be the $\mu$-ordinary abelian variety over $U$, a fixed open subset of the ordinary locus {\it in characteristic} $p$.

In  case ({L}), let $p=\pi^{+}\pi^{-}$. We have $\m Q_{p^f}=F_{0,p}$, $f=d$ and, over $\overline{\m{F}}_p$,  \begin{align}\label{eqn:DecompL}
\mc A^{\mu}[p^{\infty}] \cong \mc A^{\mu}[{(\pi^{+})}^{\infty}] \times \mc A^{\mu}[{(\pi^{-})}^\infty] \cong  \prod_{i=0}^f  \mc{LT}_i^{a_{i+1}-a_i} \times \mc D\left(\prod_{i=0}^f \mc{LT}_i^{a_{i+1}-a_i} \right),
\end{align} 
after defining $a_0=0$ and $a_{f+1}=n$. We shall denote the corresponding BT by $\mr{BT}_G$ and its Dieudonn\'e module by $M_G$. We have $\mr{dim}(A[{(\pi^{+})}^{\infty}])=\sum_{i=1}^f a_i$ and $\mr{dim}(A[{(\pi^{-})}^{\infty}])=\sum_{i=1}^f b_i$ (which is the dimension of $\mr{Fil}^1(M_G)$).

 Similarly in case (U) we have $\m Q_{p^f}=F_p$, $2d=f$ and \begin{align}\label{eqn:DecompU}
\mc A^{\mu}[p^{\infty}] \cong \prod_{i=0}^f  \mc{LT}_{i}^{a_{i+1}-a_i},
\end{align}  
where we have defined $a_{j+d}\colonequals b_{d-j+1}$. (Note that $a_{j+(d-j)+1} - a_{j+1+(d-j-1)+1}=b_{j}-b_{j+1}=a_{j+1}-a_j$ if $2 \leq j \leq d-1$. This ensure that $\mc A[p^{\infty}]$ is self-dual under Cartier duality.) We shall again denote the corresponding BT by $\mr{BT}_G$ and its Dieudonn\'e module by $M_G$.  As before we have $\mr{dim}(\mc A^{\mu}[p^{\infty}])=dn$ (which is the dimension of $\mr{Fil}^1(M_G)$).
\begin{rmk}
These explicit descriptions tell us immediately the condition for the ordinary locus to be non-empty in case (U): $a_1= \ldots=a_f$. 
\end{rmk}
In both cases, we can identify $M_G$ with $\mr H^1_{\mr {dR}}(\mc A^{\mu})$ and also $\mr{Fil}^1(M_G)$ with $\mr{Fil}^1 \mr H^1_{\mr {dR}}(\mc A^{\mu})$. 

In the Appendix \ref{Appendix} we shall defined a refined Hasse invariant. 
In the case all the signatures are different, the invariants of the appendix coincide with the ones of \cite{WushiMH, Valentinhasse} and it is no harm for the reader to think about them when considering the Hasse invariant, as we need our reduced Hasse invariants only in the proof of Theorem \ref{thm:boundedness}. Their Hasse invariant is the product of the partial Hasse invariants $\mr{Ha}^{\mu,\sigma_i}$ defined as the determinant of  
\begin{align*}
\frac{\bigwedge^{a_i} \mr{Fr}^{f}}{p^{c_i}} : \bigwedge^{a_i}  \mr H^1_{\mr {dR}}(\mc A^{\mu})_{\sigma_i} / \mr{Fil}^1 (\bigwedge^{a_i}  \mr H^1_{\mr {dR}}(\mc A^{\mu})_{\sigma_i})\rightarrow \bigwedge^{a_i}  \mr H^1_{\mr {dR}}(\mc A^{\mu})_{\sigma_i}/\mr{Fil}^1(\bigwedge^{a_i}  \mr H^1_{\mr {dR}}(\mc A^{\mu})_{\sigma_i}),
\end{align*}
where $c_i$ is an integer defined in the Appendix. This also shows that their Hasse invariant $\mr{Ha}^{\mu}$ is a modular form of parallel weight $p^f-1$ and that a power of it lifts to characteristic zero. 

We quote from \cite[Proposition 2.1.9]{Moonen}.
\begin{prop}\label{Prop:MoonenFil}
Let $X$ be a deformation of $\mr{BT}_G$ to a local Artinian $W(k)$-algebra. Then, \'etale locally on the $\mu$-ordinary locus, there is a unique filtration in BT with $\mc O$-structure  $X_i$ of $X$ such that $X_i$ lifts the slope filtration of $\mr{BT}_G$ (which is defined only \'etale locally). Moreover, we have a filtration $\left\{ \mc A_i \right\}_{i=0}^{f+1}$ for $\mc A^{\mu}[p^{\infty}]$, where $\mc A_0  = 0$, $\mc A_{f+1} = \mc A^{\mu}[p^{\infty}]$, and  $\mc A_1 =\mc A[p^{\infty}]^{\mr{mult}}$  ($\cong {(\mu_{p^{\infty}}\otimes \mc O)}^{a_1}$ in case (U) and  $\cong {(\mu_{p^{\infty}}\otimes \mc O)}^{a_1} \times {(\mu_{p^{\infty}}\otimes \mc O)}^{b_1}$ in case (L)). 
\end{prop}

\section{Modular forms and the Igusa tower}\label{sec:Igusatower}
\subsection{Classical modular forms}\label{sec:classicalforms}
Consider the group $P=\prod_{i=1}^{d}\mr{GL}_{a_i}\times \mr{GL}_{b_i}$. Its complex points coincide with the automorphisms of ${(V_{\C,1} \oplus V_{\C,2})}^{\m C= \mr{Id}}$, where $\m C= \mr{Id}$ means holomorphic part. Consider the semi-abelian variety $\mc G$ over $X^{\mr{tor}}$ and let us denote by $e$ the unit section $e:X^{\mr{tor}} \rightarrow \mc G$. We have a locally free sheaf $\omega \colonequals e^\ast \Omega^1_{\mc G/X^{\mr{tor}}}$. 

Recall we had fixed a CM type $(\sigma_i,\bar \sigma_i)_{i=1}^d$ for $(F_0,F)$; over $F$ we have a decomposition 
\begin{align*}
\omega \cong \bigoplus_{i=1}^d \omega_{\sigma_i} \oplus \omega_{\bar \sigma_i} 
\end{align*}
and we define $\mc E = \mc E^+ \oplus \mc E^-$, where
\begin{align*}
 \mc E^+ \cong & \bigoplus_{i} \isom(\mc O^{a_i}_{X^{\tor}},\omega_{\sigma}),\\ 
 \mc E^- \cong & \bigoplus_{i} \isom(\mc O^{b_i}_{X^{\tor}},\omega_{\bar \sigma_i}).
\end{align*}
We have that $\xi \colon \mc E \to X^{\tor}$ is an (algebraic) $P$-torsor. 

Let $\kappa=(k_{\sigma_i, 1}, \ldots, k_{\sigma_i, {a_i}},k_{\bar \sigma_i, 1}, \ldots, k_{\bar \sigma_i, {b_i}})_{i}$ be an algebraic weight. We say that $\kappa$ is:
\begin{itemize}
 \item \emph{dominant} if $k_{\sigma_i, 1} \geq \ldots \geq k_{\sigma_i, {a_i}} \geq k_{\bar \sigma_i, 1}\geq  \ldots \geq k_{\bar \sigma_i, {b_i}}$ for all $i$;
 \item \emph{regular} if $k_{\sigma_i, 1} > \ldots > k_{\sigma_i, {a_i}} > k_{\bar \sigma_i, 1} > \ldots >k_{\bar \sigma_i, {b_i}}$ for all $i$;
 \item \emph{big enough} if $\kappa$ is dominant and  $k_{\bar \sigma_i, {b_i}}>C$ for all $i$, with $C$ fixed, depending only on the Shimura datum. We shall write $\kappa \gg 0$;
 \item \emph{very regular} if  $\kappa$ is big enough and  $\kappa(w\alpha_p w \alpha_p^{-1}) \in p \m Z$ for all $i$, where $\alpha_{p}=(\alpha_{p,i})_i$ and $\alpha_{p,i}$ is the diagonal matrix with $1,p,\ldots,p^{a_i-1},1,p,\ldots,p^{b_i-1}$ on the diagonal. Here $w$ ranges over the elements in the Weyl group of $\prod_i \mr{GL}_{a_i}\times \mr{GL}_{b_i}$.
  
\end{itemize}
The sheaf of weight $\kappa$ automorphic forms is then
\begin{align*}
\omega^k= {\xi_{\ast} \mc O_{\mc E}}^{( \prod_i N_{a_i} \times N_{b_i})}[\kappa],
\end{align*}
being $ \prod_i N_{a_i} \times N_{b_i}$ the algebraic group of upper unipotent matrices in $P$. 
Locally for the Zariski topology, this sheaf is isomorphic to $R[\kappa]$,  the algebraic representation of $P$ of highest weight $\kappa$.
\begin{defin}
For any $\mc O_K$-algebra $R$ we define the space of weight $k$ modular forms as
\begin{align*}
\M_k(\mc H,R) \colonequals \Homol^0({X^*}_{/R},\pi_\ast\omega^\kappa)=\Homol^0({X^{\tor}}_{/R},\omega^\kappa)
\end{align*}
and the space of cusp forms 
\begin{align*}
\mr S_k(\mc H,R) \colonequals \Homol^0({X^*}_{/R},\pi_\ast(\omega^\kappa(-D)))=\Homol^0({X^{\tor}}_{/R},\omega^\kappa(-D)),
\end{align*}
where $D$ denotes the boundary of $X^{\tor}$.
\end{defin}
\subsection{The Igusa tower}
Recall that $S$ is the $\mu$-ordinary locus of our Shimura variety. Thanks to Proposition \ref{Prop:MoonenFil} we can define over $S$
\begin{align*}
\mr{Gr}^{\bullet}(\mc A^{\mu} [p^{\infty}])= \prod_{i=1}^{f+1} \mc A_i/\mc A_{i-1}, \;\;\; \mr{Gr}^{\bullet}(\mc A^{\mu}[p^{\infty}]^{\circ})= \prod_{i=1}^{f} \mc A_i/\mc A_{i-1}.
\end{align*}
The interest in removing the \'etale part lies in the fact that the connected part of $\mc A^{\mu}$ can be extended naturally to the whole $S^{\mr{tor}}$. Indeed, let $W$ be a cusp label for a component of the boundary of the toroidal compactification of rank $r$ (hence $r \leq a_1$). Over $X^{\mr{tor}}_W$ we have the semi-abelian scheme 
\begin{align*}
0 \rightarrow \m G_m\otimes \mc O \otimes W \rightarrow \mc G \times_{X^{\mr{tor}}} X^{\mr{tor}}_W \rightarrow \mc A_W \rightarrow 0,
\end{align*}
where $\mc A_W$ is the universal abelian variety associated with the Shimura datum for $W$. If we restrict to the ordinary locus and take the $p^l$-torsion we obtain
\begin{align*}
0 \rightarrow {(\mu_{p^l} \otimes \mc O)}^r\rightarrow \mc G \times_{X^{\mr{tor}}} X^{\mr{tor}}_W[p^l]\rightarrow \mc A^{\mu}_W[p^l] \rightarrow 0.
\end{align*}
Using (\ref{eqn:DecompL}) or (\ref{eqn:DecompU}) we see that the difference between $A^{\mu}_W[p^l]$ and $A^{\mu}[p^l]$ is given by $r$ copies of $\mu_{p^l} \otimes \mc O$ and $\m Z/p^l \m Z \otimes \mc O$. Hence, the $\mr{Gr}^{\bullet}(\mc A^{\mu,\circ})$  and $\mr{Gr}^{\bullet}(\mc G^{\mu,\circ})$ are the same, or better the first extends to the second. 

Hence, we can define 
\begin{align*}
\mr{Ig}_{m,l} \colonequals  & \mr{Isom}_{\mc O_F \otimes \m Z_p -\mr{BT}, S^{\mr{tor}}_m}(\mr{Gr}^{\bullet}( \mc{G}_m^{\mu,\circ}[p^{l}]), \mr{BT}_{G,m}^{\circ}[p^l]),\\
\mr{Ig} \colonequals  & \varinjlim_m\varprojlim_l \mr{Ig}_{m,l}.
\end{align*} 
Note that we require the isomorphisms to respect the $+$ and $-$ parts in case (L) but do not require them to respect the polarisation condition (as $\mc G$ is no longer polarised).

\begin{teo}
The formal scheme $\mr{Ig}$ is a pro-\'etale Galois cover of $S^{\mr{tor}}_{\infty}$ of Galois group $\prod_{i=0}^{f-1} \mr{GL}_{a_{i+1}-a_{i}}(\mc O)$ in case (U) and $\prod_{i=0}^{f-1} \mr{GL}_{a_{i+1}-a_{i}}(\mc O) \times \prod_{i=1}^{f}\mr{GL}_{b_{i}-b_{i+1}}(\mc O)$ in case (L).
\end{teo}
The Galois group that appears as automorphisms of the Igusa tower is the group of automorphisms of the (connected part of the) $\mu$-ordinary BT. We denote by $N_{\mr{Ig}}$ the  upper unipotent part of $\mr{Gal}(\mr{Ig},S^{\mr{tor}})$ and by  $T_{\mr{Ig}}$ the  corresponding torus. We have that $T_{\mr{Ig}}$  is isomorphic to ${(\mc O^\ast)}^{a_f}$ in case (U) and ${(\mc O^\ast)}^{a_f} \times {(\mc O^\ast)}^{b_f}$ in case (L).

We can also push-forward $\mr{Ig}$ to the minimal compactification and we shall denote it by $\mr{Ig}^*$. We shall write $\mc I^0$ and $\mc I^{*,0}$ for the sheaves of ideal of the boundary of the two Igusa tower. Note that $\pi_* \mc O_{\mr{Ig}}= \mc O_{\mr{Ig}^*}$ and $\pi_* \mc I^0 = \mc I^{*,0}$ for the same reasoning as \cite[Theorem 7.2.4.1]{lan}. (See also \cite[Theorem 6.2.2.1]{lan_ram}.)

The attentive reader will have noticed that this group has too many characters. Taking $G$ as in Example  \ref{esempio1} with $p$ inert in $F$, our Igusa tower has Galois group isomorphic to $\mc O^\ast \times \mc O^\ast$. The usual Hida theory would take for the weight space $\mr{Hom}_{\mr{cont},\m Z_p}(\mc O^\ast \times \mc O^\ast, \m C_p^\ast)$ while the classical weights are only three. Inspired by \cite{shimura}, we define the weight space $\mc W$ to be $\mr{Hom}_{\mr{cont}}(\mc O^\ast,\m C_p^\ast)\times \mr{Hom}_{\mr{cont},\mc O}(\mc O^\ast,\m C_p^\ast)$ where $\mr{Hom}_{\mr{cont},\mc O}$ means that we consider continuous characters such that the induced morphism between Lie algebras is $\mc O$-linear. The reason is that the $\mr{Fil}^1$ of the Dieudonn\'e module of $\mu_{p^{\infty}}$ is a two dimensional module (over $\mc O$) where $\mc O$ acts both via $\sigma: \mc O \rightarrow \mc O $ and $\bar \sigma: \mc O \rightarrow \mc O $, while on the  $\mr{Fil}^1$ of the Dieudonn\'e module of $\mc {LT}_{\mc O}$ the action of $\mc O$ is only via $\sigma$. Every generalised $ \mc {LT}_i$ will hence contribute to the weight space only according to the character of $\mathcal{O}$ acting on its $\mr{Fil}^1$. Following this idea we now introduce our weight space.
\subsection{The weight space} \label{subsec: weight space}
Our weight space will parametrise characters that are `partially locally $\mc O$-analytic', in the sense that we want to control the induced morphism between Lie algebras. Let us be more precise.

Let $A$ be a $K$-affinoid algebra and let $\chi \in \mr{Hom}_{\mr{cont}}(\mc O^\ast,A^\ast)$ be a continuous character. We have an induced morphism $d \chi \colon \Lie(\mc O^\ast) \to \Lie (A^\ast)$. This extends to a morphism, denoted with the same symbol,
\[
d \chi \colon \Lie(\mc O^\ast) \otimes_{\Z_p} \mc O \to \Lie (A^\ast).
\]
The Lie algebra of $\mc O^\ast$ and of $A^\ast$ can be naturally identified with $\mc O$ and $A$, respectively. Moreover we can identify $\Lie(\mc O^\ast) \otimes_{\Z_p} \mc O = \mc O \otimes_{\Z_p} \mc O$ with $\prod \mc O$, where the product is over the embeddings of $\mc O$ in $\m C_p$. In practice we get a morphism
\[
d \chi \colon \prod_{\mr{Hom}(\mc O,\m C_p)} \mc O \to A.
\]
\begin{defi} \label{defi: J analytic}
Let $J$ be a subset of $\mr{Hom}(\mc O,\m C_p)$ and let $f \colon \Lie(\mc O^\ast) \to \Lie(A^\ast)$ be a $\Z_p$-linear morphism. We say that $f$ is \emph{$J$-linear} if the induced morphism $\mc O \otimes_{\Z_p} \mc O \to A$ factors through a map
\[
\prod_{J} \mc O \to A.
\]
We will write $\Hom_J(\Lie(\mc O^\ast), \Lie(A^\ast))$ for the set of $J$-linear morphism. Following \cite[D\'efinition 4.1]{deieso}, we say that $\chi \in \mr{Hom}_{\mr{cont}}(\mc O^\ast,A^\ast)$ is \emph{locally $J$-analytic} if $d \chi$ is $J$-linear. Similarly, as in {\it loc. cit.}, we have the more general notion of a locally $J$-analytic function on $\mc O$.		
\end{defi}
Let $i$ be an integer between $0$ and $f$. We define $J(\mc{LT}_i)$ to be the set of $\sigma_j$ for $j \geq 1+i$ and $J(\mc D(\mc{LT}_i))$ to be the set of $\sigma_j$ for $j \leq i$. We also set
\[
\mr{Hom}_{\mr{cont},\mc{LT}_i}(\mc O^\ast,A^\ast) \colonequals \set{\chi \in \mr{Hom}_{\mr{cont}}(\mc O^\ast,A^\ast) \mbox{ such that } \chi \mbox{ is } J(\mc{LT}_i)-\mbox{analytic}}.
\]
The definition of $\mr{Hom}_{\mr{cont},\mc D(\mc{LT}_i)}(\mc O^\ast,A^\ast)$ is analogous.
We call $\mc W_{\mc{LT}_i}$ the functor
\[
A \mapsto \mr{Hom}_{\mr{cont},\mc{LT}_i}(\mc O^\ast,A^\ast).
\]
By definition we have that $\mc W_{\mc{LT}_i}(A) \subseteq \mr{Hom}_{\mr{cont}}(\mc O^\ast,A^\ast)$ and $\mr{Hom}_{\mr{cont}}(\mc O^\ast,\cdot)$ is representable by the disjoint union of $p^f-1$ open unit disks of dimension $f$ (via $\mc O^\ast \cong \mu_{p^f-1} \times \mc O \cong \mu_{p^f-1} \times \Z_p^f$). Letting $\mathbf{D}_f$ be such a disk, by definition the diagram
\[
\begin{tikzcd}
\mc W_{\mc{LT}_i}(A) \arrow[hook]{r} \arrow{d}{d} & \coprod \mathbf{D}_f(A) \arrow{d}{d} \\
\mr{Hom}_{J(\mc{LT}_i)}(\Lie(\mc O^\ast), \Lie(A^\ast)) \arrow[hook]{r} & \mr{Hom}(\Lie(\mc O^\ast), \Lie(A^\ast))
\end{tikzcd}
\]
is cartesian. We can follow word by word the approach of \cite[Sections 1 and 2]{fourier} to show that $\mc W_{\mc{LT}_i}$ is representable by a closed analytic subvariety, denoted again $\mc W_{\mc{LT}_i}$, of $\coprod \mathbf{D}_f$. Similarly, we have the closed subvariety $\mc W_{\mc D(\mc{LT}_i)}$
\begin{prop} \label{prop: descr W}
We have that $\mc W_{\mc{LT}_0} = \coprod \mathbf{D}_f(A)$. In general, the base change to $\m C_p$ of $\mc W_{\mc{LT}_i}$ is isomorphic to the disjoint union of $p^f-1$ copies of an $(f-i)$-dimensional open unit disk. Similarly, $\mc W_{\mc D(\mc{LT}_f)} = \coprod \mathbf{D}_f(A)$ and the base change to $\m C_p$ of $\mc W_{\mc D(\mc{LT}_i)}$ is isomorphic to the disjoint union of $p^f-1$ copies of an $i$-dimensional open unit disk.
\end{prop}
\begin{proof}
The statement for $\mc W_{\mc{LT}_0}$ is obvious. For the general case we consider the Lubin-Tate group
\[
\mc G \colonequals \prod_{j \geq i + 1} \BT_{(0,\ldots,0,1,0,\ldots,0)},
\]
where the $1$ is in the $j$-position. (Note that this is \emph{not} $\mc{LT}_i$ but the first filtered piece of the Dieudonn\'e module of the two are isomorphic as $\mc O \otimes_{\m Z_p} \mc O$-modules.) Using $\mathcal G$,  the proof is exactly the same as in \cite[Section 3]{fourier} (using $\mc O^\ast \cong \mu_{p^f-1} \times \mc O$). The proof for $\mc W_{\mc D(\mc{LT}_i)}$ is similar.
\end{proof}
\begin{rmk} \label{rmk: not is fin ext}
If $i= f- 1$ then, by \cite[Section 3]{fourier}, we know that $\mc W_{\mc{LT}_i}$ is \emph{not} isomorphic to a open unit disk, even after base change to any finite extension of $K$. We conjecture that the same hold for any $i \neq 0, f$.
\end{rmk}
We now move on to define our Iwasawa algebra.
\begin{defi} \label{defi: Lambda}
Let us denote by $\mc W^{\circ}_{\mc{LT}_i}$ the connected component of $\mc W_{\mc{LT}_i}$ containing the trivial character. We set
\[
\Lambda^\circ_{\mc{LT}_i} \colonequals \mc O_{\mc W^\circ_{\mc{LT}_i}}(\mc W^\circ_{\mc{LT}_i})^{\leq 1} \mbox{ and } \Lambda_{\mc{LT}_i} \colonequals \mc O_{\mc W_{\mc{LT}_i}}(\mc W_{\mc{LT}_i})^{\leq 1},
\]
where $\mc O(\cdot)^{\leq 1}$ means the ring of analytic functions bounded by $1$. The definition of $\Lambda^\circ_{\mc{D}(\mc{LT}_i)}$ and $\Lambda_{\mc{D}(\mc{LT}_i)}$ is the same.
\end{defi}
\begin{remark} \label{rmk: Lambda complicated}
The ring-theoretic properties of $\Lambda_{\mc{LT}_i}$ are quite mysterious, for example it does not satisfy Weierstra\ss{} preparation theorem (see the proof of \cite[Lemma~3.9]{fourier}, where the authors construct an element of $\Lambda_{\mc{LT}_{f-1}}$ that has infinitely many zeros) and we do not know if it is noetherian. Moreover, we ignore whether it is an admissible ring, so we can not take the generic fiber of $\Spf(\Lambda_{\mc{LT}_i})$.
\end{remark}
Given a point $x \in \mc W_{\mc{LT}_i}$, we denote by $P_x$ the prime ideal of $\Lambda_{\mc{LT}_i}$ of functions vanishing at $x$ and similarly for $\Lambda_{\mc{D}(\mc{LT}_i)}$.
\begin{prop} \label{prop: properties of Lambda}
We have the following:
\begin{enumerate}
 \item The ring $\mc O_{\mc W^\circ_{\mc{LT}_i}}(\mc W^\circ_{\mc{LT}_i})$ is isomorphic to the continuous dual of the locally convex vector space of locally $J(\mc{LT}_i)$-analytic functions $C^{\mc{LT}_i-\mathrm{an}}(\mc O, K)$.
 \item The natural restriction morphism
\[
\mc O_K \llbracket X_1, \ldots, X_f \rrbracket = \mc O_{\mathbf{D}_f}(\mathbf{D}_f)^{\leq 1} \hookrightarrow \Lambda_{\mc{LT}_i}^\circ
\]
is injective.
\item The set of prime ideal $P_x$ for the points $x: z \mapsto \prod_{\sigma \in J(\mc{LT}_i)} \sigma(z)^{k_{\sigma}}$ with $k_{\sigma} \gg 0$ is Zariski dense. 
\end{enumerate}
The same is true for $\mc{D}(\mc{LT}_i)$ instead of $\mc{LT}_i$.
\end{prop}
\begin{proof}
\begin{enumerate}
 \item One can find explicit equations for $\mc W^{\circ}_{\mc{LT}_i}$ as in \cite[Corollary 1.5]{fourier} or \cite[Remark 1.14]{BSX}; the statement is then a consequence of Amice's transform as in \cite[Theorem 2.3]{fourier}
 \item The proof for bounded functions is exactly the same as \cite[Lemma 1.15]{BSX} using $C^{\mc{LT}_i-\mathrm{an}}(\mc O, K)$ in place of $C^{\mathrm{an}}(\mc O, K)$. Then one notices that $1 \geq |f|_{{\mathbf{D}_f}} \geq |f|_{\mc W^{\circ}_{\mc{LT}_i}}$ for all $f \in \mc O_{\mc W^\circ_{\mc{LT}_i}}(\mc W^\circ_{\mc{LT}_i})$.
 \item After the (flat) extension to $\mc O_{\C_p}$, we have the isomorphism $\Lambda^{\circ}_{\mc{LT}_i} \ctens_{\mc O_K} \mc O_{\C_p} \cong \mc O_{\C_p} \llbracket T_1, \ldots, T_{f-i}\rrbracket$. Let $f$ in $\Lambda_{\mc{LT}_i}^\circ$ be an element which vanishes on all $P_x$ as in the statement of the proposition. Viewing $f$ as an element of $\mc O_{\C_p} \llbracket T_1, \ldots, T_{f-i}\rrbracket$, the evaluation at a point $x$ corresponds to $T_i \mapsto t_{i,x}$ for some $t_{i,x} \in \C_p$. By \cite[Proposition 1.20]{BSX}, the norm of $(t_{i,x})_i$ as a point of $\mathbf{D}_{f-i, \C_p} = \Spf(\mc O_{\C_p} \llbracket T_1, \ldots, T_{f-i}\rrbracket)^{\rig}$ is determined by the norm of the corresponding point $x=(x_1, \ldots x_f) \in \mathbf{D}_{f, \C_p}$. In particular, infinitely many of the points $x \in \mathbf{D}_{f-i, \C_p}$ considered in the proposition are in a fixed closed ball of radius strictly smaller than $1$. Iterating Strassman's Theorem, \cite[Theorem, \S 6.2.1]{Robert}, we see that an element of $\mc O_{\C_p} \llbracket T_1, \ldots, T_{f-i}\rrbracket$ vanishing on all these points is $0$.
\end{enumerate}
\end{proof}
Everything we said above for characters of $\mc O^\ast$ generalises immediately to characters of $(\mc O^\ast)^n$, where $n \geq 0$ is any integer. We obtain in this way the spaces $\mc W_{\mc{LT}_i^n}^\circ$ and $\mc W_{\mc{LT}_i^n}$ and the rings $\Lambda_{\mc{LT}_i^n}^\circ$ and $\Lambda_{\mc{LT}_i^n}$. Similarly for $\mc D(\mc{LT}_i^n)$. We are now ready to define the weight space we are interested in.
\begin{defi}\label{def:weights}
In case (L) then we define the weight space $\mc W=\mc W_G$ to be the rigid space given by
\begin{align*}
\prod_{i=0}^{f-1} \mc W_{\mc{LT}_i^{a_{i+1}-a_i}} \times \prod_{i=1}^{f}\mc W_{\mc D(\mc{LT}_i^{b_i-b_{i+1}})},
\end{align*}
under the convention that $b_{f+1}=0$.

In case (U) then we define the weight space $\mc W=\mc W_G$ to be the rigid space given by
\begin{align*}
\prod_{i=0}^{f-1} \mc W_{\mc{LT}_i^{a_{i+1}-a_i}}.
\end{align*}
\end{defi}
\begin{rmk} \label{rmq: no char et}
Note that $J(\mc{LT}_f)=\emptyset$ so $\mr{Hom}_{\mr{cont},\mc{LT}_f}(\mc O^\ast,\m C_p^\ast)=1$, the constant character. This gives another reason to discard the étale part in our Igusa tower: not only it has no $\mr{Fil}^1$, {\it i.e.} no non-zero differentials and in particular no non-zero modular forms, but also the corresponding weight space is trivial.
\end{rmk}
The following lemma tells us the our weight space has the expected dimension.
\begin{lemma} \label{lemma: dim weight space}
We have that $\mc W$ is equidimensional of dimension $nd$.
\end{lemma}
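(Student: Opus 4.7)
The plan is to compute the dimension of each factor appearing in Definition~\ref{def:weights} using Proposition~\ref{prop: descr W}, and then to recognise the sum of these dimensions as a telescoping sum that collapses to $nd$.

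First, since characters of $(\mc O^\ast)^n$ that are $J$-analytic in each coordinate form the $n$-fold product of the corresponding space for $\mc O^\ast$, the space $\mc W_{\mc{LT}_i^n}$ is, after base change to $\m C_p$, a disjoint union of $n(f-i)$-dimensional open polydisks, while $\mc W_{\mc D(\mc{LT}_i^n)}$ is a disjoint union of $n\cdot i$-dimensional polydisks. In particular each factor of $\mc W$ is smooth and equidimensional, hence so is $\mc W$, and we only need to compute its dimension.

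In case (L), where $f=d$ and we set $a_0=0$ and $b_{f+1}=0$, we find by Abel summation
\begin{align*}
\dim \mc W &= \sum_{i=0}^{f-1}(a_{i+1}-a_i)(f-i)+\sum_{i=1}^{f}(b_i-b_{i+1})\,i\\
&= \sum_{j=1}^f a_j + \sum_{j=1}^f b_j = \sum_{j=1}^f (a_j+b_j) = fn = nd,
\end{align*}
where the collapsing to $\sum a_j$ and $\sum b_j$ is the standard telescoping trick (shift the index in one of the two sums and use $a_0=0$, respectively $b_{f+1}=0$). In case (U), where $f=2d$ and $a_{j+d}=b_{d-j+1}$, the same telescoping computation gives
\begin{equation*}
\dim \mc W = \sum_{i=0}^{f-1}(a_{i+1}-a_i)(f-i) = \sum_{j=1}^{2d} a_j = \sum_{j=1}^d a_j + \sum_{j=1}^d b_j = nd,
\end{equation*}
using $a_{j+d}=b_{d-j+1}$ to rewrite the second half of the sum.

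There is no real obstacle here beyond bookkeeping: once Proposition~\ref{prop: descr W} is in hand, the statement is a purely combinatorial identity, reflecting the fact that the signature $(a_i,b_i)$ of the unitary group is distributed among the generalised Lubin--Tate factors of the $\mu$-ordinary Barsotti--Tate group in such a way that the total contribution to the weight space dimension recovers $\sum_i(a_i+b_i)=nd$, exactly as for the classical weight space.
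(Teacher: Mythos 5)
Your proof is correct and follows essentially the same route as the paper: both invoke Proposition~\ref{prop: descr W} (extended to $(\mc O^\ast)^n$) to get the dimension of each factor in Definition~\ref{def:weights} and then evaluate the resulting sum, the only difference being that you telescope the two sums separately (giving $\sum_j a_j+\sum_j b_j$) while the paper first rewrites $b_i-b_{i+1}=a_{i+1}-a_i$ in case (L) and collapses to $f\sum_i(a_{i+1}-a_i)$. This is purely a bookkeeping variation, so nothing further is needed.
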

\begin{proof}
We can base change $\mc W$ to $\m C_p$ and use Proposition~\ref{prop: descr W}. In case (L) we get
\begin{gather*}
\dim(\mc W) = \sum_{i=0}^{f-1} (f-i)(a_{i+1}-a_{i}) + \sum_{i=1}^f i(b_{i}-b_{i+1}) =  \\
\sum_{i=0}^{f-1} (f-i)(a_{i+1}-a_{i}) + \sum_{i=1}^{f} i(a_{i+1}-a_{i}) = f \sum_{i=0}^f (a_{i+1}-a_{i}) = f(a_{f+1} -a_0)= fn =nd. 
\end{gather*}
In case (U) we get 
\begin{gather*}
\dim(\mc W) = \sum_{i=0}^{f-1} (f-i)(a_{i+1}-a_{i})= f a_1 + (f-1)a_2 -(f-1)a_1 +\cdots + a_f -a_{f-1} \\ =
a_1 +a_2 + \ldots + a_{f-1} +a_{f}= n(f/2)=nd. 
\end{gather*}
\end{proof}
In particular, these weights are the character of the intersection of $\mr{Gal}(\mr{Ig}/S^{\mr{tor}})$ with the $\mc O$-points of the parabolic $P$ of \cite[\S 1.1.2]{Moonen} (which is also the parabolic $P$ of Section \ref{sec:classicalforms}). Explicitly, it consists of  $\mc O$-linear maps $M_G \rightarrow M_G$ which preserve both the decomposition of $\mr{BT}_G$ into product of generalised Lubin--Tate groups and the $\mc O$-module $\mr{Fil}^{1}(M_G)$.
\begin{rmk}
For the reader who are unhappy about our weight space, we offer an alternative definition of the Igusa tower, more in the spirit of \cite{EischenMantovan,Valentinthese} but that can still be extended to the boundary. Consider the filtration of the $p$-torsion of the universal $\mu$-ordinary abelian variety over $S_m$ given by  
\begin{align*}
0 \subset \mc A[p^{\infty}]^{\mr{mult}} \subset \mc A[p^{\infty}]^{\circ} \subset \mc A[p^{\infty}],
\end{align*}
where $\mc A^{\mr{mult}}$ resp. $\mc A^{\circ} $ denotes the multiplicative resp. connected part. The $p$-divisible group $\mc A[p^{\infty}]^{\circ}/ \mc A[p^{\infty}]^{\mr{mult}}$ is naturally polarised. One could then define $\mr{Ig}_{m,l}$ as
\begin{align*}
 \mr{Isom}_{\mc O-\mr{BT}}(\mc A[p^{l}]^{\mr{mult}}, \mc O \otimes \mu_{p^{l}}) \times \mr{Isom}_{\mc O-\mr{pol}-\mr{BT}}\left(\frac{\mc A[p^{l}]^{\circ}}{ \mc A[p^{l}]^{\mr{mult}}}, \mr{BT}^{*}[p^l]\right),
\end{align*} 
where $\mr{BT}^{*}$ is the Barsotti--Tate with $\mc O$-structure isomorphic to $\mc A[p^{\infty}]^{\circ}/ \mc A[p^{\infty}]^{\mr{mult}}$. These $\mr{Ig}_{m,l}$ extend to $S_m^{\mr{tor}}$.
\end{rmk}
\begin{defi}\label{def:Lambda}
In case (L) we define our Iwasawa algebra as
\begin{align*}
\Lambda \colonequals & \mathop{\widehat{\bigotimes}}\limits_{i=0}^{f-1} \Lambda_{\mc{LT}_i^{a_{i+1}-a_i}} \ctens_{\mc O_K} \mathop{\widehat{\bigotimes}}\limits_{i=1}^{f}\Lambda_{\mc D(\mc{LT}_i^{b_i-b_{i+1}})},\\
\Lambda^\circ \colonequals & \mathop{\widehat{\bigotimes}}\limits_{i=0}^{f-1} \Lambda^\circ_{\mc{LT}_i^{a_{i+1}-a_i}} \ctens_{\mc O_K} \mathop{\widehat{\bigotimes}}\limits_{i=1}^{f}\Lambda^\circ_{\mc D(\mc{LT}_i^{b_i-b_{i+1}})},
\end{align*}
under the convention that $b_{f+1}=0$.

In case (U) then we define  our Iwasawa algebra as
\begin{align*}
\Lambda \colonequals \mathop{\widehat{\bigotimes}}\limits_{i=0}^{f-1} \Lambda_{\mc{LT}_i^{a_{i+1}-a_i}}, & \mbox{ and } \Lambda^\circ \colonequals \mathop{\widehat{\bigotimes}}\limits_{i=0}^{f-1} \Lambda^\circ_{\mc{LT}_i^{a_{i+1}-a_i}}.
\end{align*}
\end{defi}

\subsection{The Hodge--Tate map}\label{sec:HodgeTate}
We define ($p$-divisible) $p$-adic modular forms as elements of $\tilde V$, where 
\begin{align*}
\tilde{V} & \colonequals (\varinjlim_m\varinjlim_l V_{m,l})^{N_{\mr{Ig}}}, \\
V_{m,l} & \colonequals  \mr H^0(S^{\mr{tor}}_m,\mc O_{\mr{Ig}_{m,l}}) = \mr H^0(S^{*}_m,\mc O_{\mr{Ig^*}_{m,l}}).
\end{align*}
Here $N_{\mr{Ig}}$ is the unipotent part of $\mr{Gal}(\mr{Ig},S^{\mr{tor}})$. We have that $T_{\mr{Ig}}$ acts naturally on $\tilde V$. We want to interpret classical modular forms as $p$-adic modular forms.

Let $U$ be an open subset of the $\mu$-ordinary locus. Over $U$, given a trivialisation $\gamma\colon \mc {A}^{\mu}[p^{\infty}]^{\circ} \cong_U BT_G^{\circ}$ we can define a trivialisation of $\omega$. The filtration of Proposition \ref{Prop:MoonenFil} gives a sequence of quotients $\left\{ \omega_{\mc A_i} \twoheadrightarrow \omega_{\mc A_{i-1}} \right\}$. Over $S$ these sheaves are locally free, hence the quotients all split and the splitting can be moreover chosen to be $\mc O$-equivariant. It can be extended to the toroidal compactification as $\mc A_{W,i} / \mc A_{W,0}$ are independent of $W$. Hence over the $\mu$-ordinary locus we have an isomorphism
\[
\omega = \bigoplus_{i=1}^f \omega_{\mc A_{i}/\mc A_{i-1}}.
\]
Recall that $\mc A_{i+1}/\mc A_{i} \cong \mc{LT}_{i-1}^{a_i-a_{i-1}}$. As explained above in Subsection~\ref{subsec: dieud}, we have fixed a basis $(e_{\sigma,i})$ of the Dieudonné module of $\mc{LT}_{i-1}^{a_i-a_{i-1}}$. This gives a basis $(e_{\sigma,j})$ of $\omega_{ \mc{LT}_i^{a_i-a_{i-1}}}$. In particular $\left\{\gamma^{*}(e_{\sigma,j})\right\}$ is a basis for $\omega_{S^{\mr{tor}_{\infty}}}$. Recall the $P=\prod_{i=1}^d \mr{GL}_{a_{i}}\times \mr{GL}_{b_i}$-torsor $\mc E$. Summing up, we have a map 
\begin{align*}  
\mr{HT}:  \mr{Ig} \rightarrow \mc E \times S^{\tor}_{\infty}
\end{align*}
sending $(\mc A^{\mu}_x,\gamma)$ to $(\left\{\gamma^{*}(e_{\sigma,j})\right\},\mc A^{\mu}_x)$. 
\begin{defi} \label{defi: HT}
Given a classical modular form $f$, viewed as a function on $\mc E$, we obtain, by restriction and composition with $\HT$, the function $\HT^\ast(f)$ on $\mr{Ig}$. Since by definition $f$ is invariant under the action of $\prod_i N_{a_i} \times N_{b_i}$, we have that $\HT^\ast(f)$ is invariant under the action of $N_{\mr{Ig}}$, \emph{i.e.} we have
\[
\HT^\ast(f) \in \tilde V.
\]
\end{defi}
\begin{rmk}
The name Hodge--Tate could seem a bit out of turn: the map in  \cite{PilHida,hida_control} which compares classical and $p$-adic forms is induced by the Hodge--Tate map  $\mc A^{\vee} \rightarrow \omega_{\mc A}$. In particular, the $\mr{HT}$ maps in {\it loc. cit.} would be ours composed with an involution.
\end{rmk}
We can describe the HT map more precisely: locally it sends $\mr{Gal}(\mr{Ig}/S^{\mr{tor}})$ into $\prod_{i=1}^d \mr{GL}_{a_{i}}\times \mr{GL}_{b_i}(\mc O)$ and we can write down explicitly this map. In case (U), remember that $f=2d$ and that we have written $a_{j+d}$ for  $b_{d-j+1}$. Let $(\gamma_i)_{i=0,\ldots, f-1}$ be an element of $\prod_{i=0}^{f-1} \mr{GL}_{a_{i}-a_{i-1}}(\mc O)$. We have explicitly 
\begin{align*}
\mr{HT}({(\gamma_i)}_{i=0,\ldots, f-1})= \prod_{j=1}^d \left([\sigma_{j}(\gamma_i)]_{i=0}^{j-1},[\sigma_{f+1-j}(\gamma_i)]_{i=0}^{f-j}\right), 
\end{align*} 
where $[\sigma_{j}(\gamma_i)]_{i=0}^{j-1}=[\sigma_{j}(\gamma_0),\dots,\sigma_{j}(\gamma_{j-1})]$ is an element of $\prod_{i=0}^{j-1} \mr{GL}_{a_{i+1}-a_{i}}(\mc O)$ embedded diagonally into $\mr{GL}_{a_{j}}(\mc O)$. 

In case (L) the description is similar; let $ \gamma^+=(\gamma^+_i)_{i=0,\ldots, f-1}$ and $\gamma^-=(\gamma^-_i)_{i=1,\ldots, f}$ be an element of $\prod_{i=0}^{f-1} \mr{GL}_{a_{i}-a_{i-1}}(\mc O) \times \prod_{i=1}^{f} \mr{GL}_{b_{i}-b_{i+1}}(\mc O)$. We have explicitly 
\begin{align*}
\mr{HT}(\gamma^+,\gamma^-)= \prod_{j=1}^d \left([\sigma_{j}(\gamma^+_i)]_{i=0}^{j-1},[\bar{\sigma}_{j}(\gamma^-_i)]_{i=j}^{f}\right).
\end{align*} 

\begin{esempio}
When $G$ is as in Example \ref{esempio2} with $f=4$ we have 
\begin{align*}
& \mr{HT}(\gamma_0,\gamma_1,\gamma_2,\gamma_3)=  \\
& = \left(
 \left(\sigma_1(\gamma_0), \left(\begin{array}{cccc}
\sigma_4(\gamma_0)& & & \\
 & \sigma_4(\gamma_1) & & \\
& & \sigma_4(\gamma_2) & \\
& & & \sigma_4(\gamma_3)
\end{array}\right) \right) \right.,\\
& 
\left. \left( \left(\begin{array}{cc}
\sigma_2(\gamma_0)&  \\
 & \sigma_2(\gamma_1) 
\end{array}\right), 
\left(\begin{array}{ccc}
\sigma_3(\gamma_0)& &  \\
 & \sigma_3(\gamma_1) &  \\
& & \sigma_3(\gamma_2) 
\end{array}\right) \right) \right) 
\end{align*}

\begin{rmk}
This embedding defines a map from set the algebraic weights of the parabolic $P$ of Section \ref{sec:classicalforms} to our $p$-adic weight space.
\end{rmk}
Note that, without loss of generality, we can assume $\sigma_4=\mr{Id}$. (This goes back to the choice of two fixed embeddings of $\overline{\m Q}$ in $\m C$ and $\m C_p$). We hope that this also clarifies our choice of the weights in $\mc W$.
\end{esempio}
We define by $\mr{Iw}$ the Iwahori subgroup of $G$ defined as the counter image of the Borel of $G({\mc O/p})$ (which is quasi-split as every algebraic group over a finite field) and by $\G_1(p)$ the counter image of the corresponding unipotent. More generally, we denote by $\G_0(p^l)$ (resp. $\G_1(p^l)$) the subgroup of $G(\m Z_p)$ whose elements, seen inside $G(\mc O_F)$ (which is $\cong \prod_{i=1}^{f/2} \mr{GL}_n(\mc O_F) \times  \mr{GL}_n (\mc O_F)$ in case (U) and $\cong \prod_{i=1}^{f} \mr{GL}_n(\mc O_F) \times  \mr{GL}_n(\mc O_F)$ in case (L))  reduce modulo $p^l$ to a matrix in the Borel (resp. unipotent) subgroup of $P(\mc O_F/p^l)$. 

Denote by $\mr M_{\kappa}\left(\mc H \cap\Gamma_1(p^l),\varepsilon,\mc O_K\right)$ the space of classical modular forms, defined over $\mc O_K$ (where $K$ is our coefficient field), of weight $\kappa$, level $\mc H \times \Gamma_1(p^l)$ and transforming under the finite order character $\varepsilon$ of $\G_0(p^l)/\G_1(p^l)$.
We conclude with the following proposition:
\begin{prop}\label{prop:classicaltopadic}
We have a map 
\begin{align}\label{eqn:classicaltopadic}
\mr M_{\kappa}\left(\mc H \cap\Gamma_1(p^l),\varepsilon,\mc O_K\right)\otimes \mc O_K/p^m \rightarrow V_{m,l}^{N_{\mr{Ig}}}.
\end{align} 
\end{prop}
\begin{proof}
The Hodge--Tate map gives a natural map 
\begin{align*}
\mr M_{\kappa}\left(\mc H \cap\Gamma_1(p^l),\varepsilon,\mc O_K \right)\otimes \mc O_K/p^m \rightarrow \mr H^0(\mr{Ig}_{m,l},\omega^{\kappa})
\end{align*} 
given by restriction to the $\mc O/ p^m$-points to the subgroup described above. Note that we have a linear map $l_{\mr{can}}:R[
\kappa] \rightarrow \m A^1$ which consists to evaluation at $1$. This gives 
\begin{align*}
l_{\mr{can}}: \mr H^0(\mr{Ig}_{m,l},\omega^{\kappa}) \rightarrow \mr H^0(\mr{Ig}_{m,l},\mc O_{\mr{Ig}_{m,l}})= V_{m,l}.
\end{align*} 
By the definition of $\G_1(p^l)$ the image is invariant under the $N_{\mr{Ig}}$.
\end{proof}

\section{Hecke operators}\label{sec:Heckeoperators}
As in \cite{hida_control}, we introduce two types of $\mathbb{U}_p$'s operators. Some representation-theoretic operators on the Igusa tower that shall be used to define an ordinary projector $e_{\mr{GL}}$, and some geometric operators coming from the Shimura variety of Iwahori level for the projector $e_G$. We shall compare these two operators at the end of the section. 

\subsection{Hecke operators on the Igusa tower}
We define $\mr{GL}$-type Hecke operators following \cite[\S 2.6]{hida_control}. Let $\mr{GL}_m$ be the usual algebraic group, $B_m$ the Borel of upper-triangular matrices, $T_m$ its maximal torus and $U_m$ the unipotent part. For $j=1,\dots, m$ we let $\alpha_j$ be the diagonal matrix $[1,\dots,1,p,\ldots p]$ with $j$ $p$'s and we let $t_j$ be the double coset $U_m(\m Z_p) \alpha_j U_m(\m Z_p)$. We can write as usual $t_j = \bigsqcup U_m(\m Z_p) u $, where $u$ varies in a set of representatives of the quotient $t_j/U_m(\Z_p)$. If $f $ is a function on $\mr{GL}_m(\mc O/ p^l \mc O)$ invariant by the action of $B_m(\mc O/ p^l \mc O)$, we let $t_j$ acts on $f$ via the formula 
\begin{align*}
f \vert t_j (\gamma) = \sum_u f(u.\gamma). 
\end{align*}
where $u.\gamma$ is defined as $\gamma^{-1} u^{-1} \gamma$ and the sum is independent of the choice of the set of representatives. If $f$ is invariant only by $U_m(\mc O/ p^l \mc O)$ but is in the $\kappa$-eigenspace, for $\kappa$ a character of $T_m(\mc O/ p^l \mc O)$, then $f \vert t_j $ is also well-defined and belongs to the same eigenspace for $T_m(\mc O/ p^l \mc O)$. We define $\mb t_m \colonequals \prod_{j=1}^m t_j$. Note that $\prod_{j=1}^m \alpha_i$ contracts $U_m(\mc O)$ into $\mr{Id}_m$. Let $A$ be a $p$-adically complete flat $\Z_p$-algebra, and let $K \colonequals A[1/p]$. We define $R_A[\kappa]$ (resp. $F_K[\kappa]$) as the algebraic (resp. topological, in the sense of \cite[\S 3.1.2]{PilHida}) representation of $\mr{Res}^{\mc O}_{\m Z_p}\mr{GL}_m$ (resp. of $\mr{GL}_m(\mc O)$) of highest weight $\kappa$ for $\mr{Res}^{\mc O}_{\m Z_p} T$ (resp. $T_m(\mc O)$). If $\kappa$ is regular (resp. very regular) then $\mb t^{n!}_m$ converges to an operator $e_m$. The image $e_m(R_A[\kappa])$ (resp. $e_m(F_K[\kappa])$) is isomorphic, via the projection $l_{\mr{can}}$, to the line generated by the highest weight vector (see \cite[Proposition 3.2, Proposition 3.3]{PilHida}).

We are ready to define the ordinary projector $e_{\mr{GL}}$.
\begin{defi} \label{defi: ord proj}
In case (U) we define
\begin{align*}
\mb t_{\mr{GL}} \colonequals  \prod_{i=0}^{f-1} \mb t_{a_{i+1}-a_i}.
\end{align*}  
and in case (L) we define
\begin{align*}
\mb t_{\mr{GL}} \colonequals  \prod_{i=0}^{f-1} \mb t_{a_{i+1}-a_i} \times \prod_{i=1}^f  \mb t_{b_{i}-b_{i+1}}.
\end{align*}
Since the Igusa tower is a cover of $S_\infty^{\tor}$ with group $\prod_{i=0}^{f-1} \mr{GL}_{a_{i+1}-a_{i}}(\mc O)$ in case (U) and $\prod_{i=0}^{f-1} \mr{GL}_{a_{i+1}-a_{i}}(\mc O) \times \prod_{i=1}^{f}\mr{GL}_{b_{i}-b_{i+1}}(\mc O)$ in case (L), we obtain an operator
\begin{gather*}
\mb t_{\mr{GL}} \colon \tilde{V} \to \tilde{V} \\
f(x,\gamma) \mapsto f(x, \mb t_{\mr{GL}}(\gamma)).
\end{gather*}
Here $x$ corresponds to a point of $S_\infty^{\tor}$ and $\gamma$ is a trivialisation as in the definition of the Igusa tower.

We have that $\mb t_{\mr{GL}}$ preserves $\tilde{V}[\kappa]$, for $\kappa$ a character of $T_{\mr{Ig}}$. Moreover, with the same reasoning as  \cite[page~20]{hida_control}, we see that it sends $V_{m,l+1}[\kappa]$ into $V_{m,l}[\kappa]$. In case (U), whenever it is well-defined (it is necessary to know that the module spanned by $\mb t^{n}_{\mr{GL}}(f)$ is of finite rank for all $f \in V_{m,l}$), we define
\[
e_{\mr{GL}}\colonequals  \lim_{n_0,\ldots,n_{f-1}} \mb \prod_{i=0}^{f-1} \mb t^{n_i!}_{a_{i+1}-a_i}.
\]
Similarly for case (L).
\end{defi}
\subsection{Hecke operators on the Shimura variety}
In this section only we will work over the rigid fiber of our varieties, without changing the notation. We recall the definition of the correspondences $C_i$ used to define the $\mr U_{p,i}$ operators from \cite[\S 2.3]{Bijamu}. He defines correspondences over (the rigid fiber of) $X_{\mr{Iw}}$.

In case (L), let $ 1 \leq  i \leq n-1$. The correspondence $C_i$ parametrises points $(A, \lambda, \iota, \alpha, (H_{i,\bullet})_i)$ of $X_{\mr{Iw}}$ with in addition an $\mc O$-stable subgroup $L = L^+ \oplus L^- $ for which $H_i \oplus L^+ = A[\pi^+]$. 
By Remark \ref{rmk: orth} it is enough to specify $L^+$. 
We have two projections $p_1,p_2:C_i \rightarrow X_{\mr{Iw}}$ corresponding to forgetting $L$ and taking the quotient by $L$. 
The universal isogeny $\mc A \rightarrow \mc A/L$ induces an isomorphism $p^{*}(\kappa):p_2^* \omega^{\kappa} \rightarrow p_2^*\omega^{\kappa}$.
One defines $\tilde{\mr{U}}_{p,i}$ as the composition 
\begin{align*}
\mr H^0(X_{\mr{Iw}},\omega^{\kappa}) \rightarrow \mr H^0(C_i,p_2^*\omega^{\kappa}) \stackrel{p^*(\kappa)}{\longrightarrow} \mr H^0(C_i,p_1^*\omega^{\kappa})\stackrel{\mr{Tr}(p_1)}{\longrightarrow} \mr H^0(X_{\mr{Iw}},\omega^{\kappa}).
\end{align*}
Define moreover: 
\begin{align*}
N_i = & \sum_{j}  \mr{max}(a_{j}-i,0) \kappa_{\sigma_j,a_j} +  \mr{max}(j-a_{j},0)\kappa_{\bar \sigma_j,b_j},\\
n_i = & \sum_{j}  \mr{min}(a_{j},j) \mr{min}(n-i,b_j).
\end{align*}
The number $N_i$ comes from $p^*(\kappa)$ while $n_i$ is an inseparability degree. 
\begin{defi}\label{def:U_p}
 We define $\mr U_{p,i}\colonequals p^{-N_i-n_i}\tilde{\mr{U}}_{p,i}$;  it is well defined over $\mc O_K$ (see Remark \ref{rmk:welldefined}).
\end{defi}
In case (U) let $ 1 \leq i \leq n/2 $. If $i < n/2$ the correspondence $C_i$ parametrises points $(A, \lambda, \iota, \alpha, (H_{i,\bullet})_i)$ of $X_{\mr{Iw}}$ with in addition an $\mc O$-stable subgroup $L$ of $\mc A[p^2]$ such that $H_i \oplus L[p] = A[p]= {H_i}^{ \perp} \oplus pL $.
If $i=n/2$ one considers $L$ such that  $H_i \oplus L = A[p]$.

Define:
\begin{align*}
N_i = &  \sum_{j}  \mr{max}(a_{j}-i,0) \kappa_{\sigma_j,a_j} +  \mr{max}(b_j-i,b_j-a_j)\kappa_{\bar \sigma_j,b_j}  {\mbox{ if }i < n/2 },\\
n_i = & n \sum_{j}  \mr{min}(a_{j},i)  {\mbox{ if }i < n/2 },\\
N_i = & \sum_{j}  \frac{b_j-a_j}{2}\kappa_{\bar \sigma_j,b_j}  {\mbox{ if }i = n/2 },\\
n_i = & \frac{n}{2}\sum_{j}a_j {\mbox{ if }i = n/2 }.
\end{align*}

The definitions of $\tilde{\mr U}_{p,i}$ and ${\mr U}_{p,i}$ are as before.

\subsection{A comparison}\label{sec:comparison}
We now want to extend the action of the operators $\mr U_{p,i}$ to the Igusa tower. 
The idea is that the filtration defined by Proposition \ref{Prop:MoonenFil} will give a section to $X_{\mr{Iw}}$.

The first thing to do is to interpret these operators from a linear algebra perspective. Let $k = \overline{\m F}_p$. We can identify the Dieudonn\'e module of $\mc A^{\mu}_{/k}$ with the Tate module of  $\mc A^{\mu}_{/\mr{Frac}(W(k))}$. We recall that the Tate module has rank $2dn$ over $\m Z_p$, and that $f=d$ in case (L) and $f=2d$ in case (U).

The Hecke operators in case (L) are the same as the ones of \cite[\S 6.5]{hida_control}. We let  $\langle\overline{x}_1,\ldots,\overline{x}_{n} \rangle$ be an $\mc O$-basis for $\mc A^{\mu}_{/\mr{Frac}(W(k))}[\pi^+]$ and $\langle \overline{x}_{n+1},\ldots,\overline{x}_{2n} \rangle$ be an $\mc O$-basis for $\mc A^{\mu}_{/\mr{Frac}(W(k))}[\pi^-]$. We say that the basis is adapted to $L_i$ if $L_i$ can be written as
\[
L_i = \langle \overline{x}_{i+1},\ldots,\overline{x}_{n}, \overline{x}_{2n-i},\ldots,\overline{x}_{2n}\rangle.
\]
The $\mr U_{p,i}$ operator can be interpreted as the double quotient operator $\mr{Iw}\beta_i \mr{Iw}$. Let us describe the corresponding matrix $\beta_i$: if $i>a_j$ the $j$-th component of $\beta_i$ is $\left( [p,\ldots,p],[1,\ldots,1,p,\dots, p]\right)$ where $p$ appears $a_j$ times in the first matrix and $i-a_j$ times in the second; if $a_j \geq i$ then it is $\left( [1,\ldots, 1,p,\ldots,p],[1,\ldots,1]\right)$ where $p$ appears $i$ times. The number $n_i$ defined before coincides with the index defined by Hida, that he calculates explicitly in \cite[(6.12)]{hida_control}.

In case (U), fix a basis $\langle x_1,\ldots,x_{n} \rangle$ of the Tate module as $\mc O$-module adapted to  $L_i$, and let $\langle \overline{x}_1,\ldots,\overline{x}_{n} \rangle$ be the corresponding basis for $\mc A^{\mu}_{/\mr{Frac}(W(k))}[p^2]$.  This means that $L_i \subset \mc A^{\mu}_{/\mr{Frac}(W(k))}[p^2]$ is given by 
\begin{align*}
\langle p\overline{x}_{i+1},\dots, p\overline{x}_{n-i}, \overline{x}_{n-i+1},\ldots, \overline{x}_{n} \rangle
\end{align*}
if $i < n/2$. If $i=n/2$ then $L = \langle p\overline{x}_{n/2},\dots, p\overline{x}_{n} \rangle \subset \mc A^{\mu}_{/\mr{Frac}(W(k))}[p]$. We can define for each $L_i$ an adelic matrix $\beta_i $. If $i < n/2$  and $a_j \geq i$ then
\[
\beta_i=[1,\ldots,1,p,\dots, p, p^2,\ldots,p^2]
\]
with $i$ $1$'s and $p^2$'s, and $n-2i$ $p$'s. At places where $ i >a_j$ we have
\[
\beta_i = [1,\ldots,1,p,\dots, p, p^2,\ldots,p^2]
\]
where $1$ appears $a_j$ times and $p$ appears $n-2a_j = b_j -a_j$ times. If $i=n/2$ then $\beta_i = [1,\ldots,1,p,\ldots,p]$ with $n/2$ $1$'s and $p$'s.
\begin{rmk}
Note that the number $n_i$ is the sum over all places of $F_0$ of the numbers defined in \cite[(6.10)]{hida_control}. This is not really clear at first sight so let's calculate them. Suppose $i \neq n/2$. Build a $a_j \times b_j$ table and put a $+1$ in the rows from $i+1$ to $a_j$ (accounts for the multiplication by $\beta_i$), then add a $-2$ in the columns from $b_j-i+1$ till $b_j$ and a $-1$ in the column from $b_j-(a_j+b_j -2i+i)=i-a_j$ till $b_j-i-1$ (these account for the multiplication by $\beta_i^{-1}$). 

If $a_j \leq i$ then there are no $+1$, $-1$ appears $a_j(b_j -i-1-(i-a_j-1)))=a_j(n-2i)$ times and $-2$ appears $a_j(b_j-(b_j-i+1-1))= a_j i$  times. Sum up the opposite to get $a_j(n-2i) + 2a_j i= na_j$. Similar if $a_j >i$ or $i=n/2$.
\end{rmk}
Concerning the explanation for $N_i$, note that the matrix $\beta_i$ is sent diagonally into \begin{align}\label{eqn:GoverO}
 G(\mc O \otimes_{\m Z_p} \mc O ) \cong \prod_{j=1}^d ( \mr{GL}_{a_j}\times \mr{GL}_{b_j}) \times (\mr{GL}_{b_j} \times \mr{GL}_{a_j})  (\mc O)
\end{align} and  only the first factors of the two couples contribute (because they are the ones appearing in $P$). (So, for example in case (U), to justify the appearance of $\mr{max}(b_j-i,b_j -a_j)$, note that it is $b_j -i=(a_j-i)+(b_j-a_j)$ if $a_j \geq i$ and otherwise it is $b_j -a_j$, which is exactly  the number of times that $p$ appears in the two upper blocks  $( \mr{GL}_{a_j}\times 1_{b_j}) \times (\mr{GL}_{b_j} \times 1_{a_j}) $.) 

In both case, one can define an operator $\mb T_{i}$  on $V_{m,l}[\kappa]$, similarly to \cite[page~36]{hida_control}.  For each point $(\mc A_x,\gamma)$ where $\mc A_x$ is the $\mu$-ordinary abelian variety corresponding to $x$ in $S_m$ and $\gamma$ a class in $\mr{Gal}(\mr{Ig}_{m,l}/S^{\mr{tor}}_m)/N_{\mr{Ig}}(\mc O/p^l \mc)$ and for each isogeny $\xi: \mc A_x \rightarrow \mc A_x/L_x$ one defines a point $(\mc A_x/L_x,\gamma_{L_x})$ in  $\mr{Ig}_{m,l}$. 
\begin{defi}\label{def:T_i}
We define the geometric operator $\mb T_i$ as follows:
\begin{align*}
\mb T_{i} f (x,\gamma)= p^{-n_i}\sum_{ L_x \in p_2p_1^{-1}(x) } f(\mc A_x/L_x,\gamma_{L_x}).
\end{align*}
\end{defi}
\begin{rmk}\label{rmk:welldefined}
Note that this expression is well defined as the na\"ive operators are divisible by $n_i$ by \cite[Lemma 6.6]{hida_control}. 
\end{rmk}
We define $\mb T \colonequals  \prod_{i=1}^{n/2} \mb T_{i}$.
Assuming it converges, we can hence define $e_G=\lim_n \mb T^{n!}$.
From the matrix description and the decomposition in \eqref{eqn:GoverO} it is clear that the projector $e_{\mr{GL}}$ is contained in $e_G$. It is clear that we have the relation $ \mb T_i = \mr U_{p,i}$ (after composition with \eqref{eqn:classicaltopadic}) hence the operators of the previous section are well defined.

To conclude, we make the following observation. 
\begin{lemma}\label{lemma:commute}
Let $\mc A^{\mu}$ be the $\mu$-ordinary universal abelian variety over $\overline{\m F}_p$. For every $L_i$ we have $\mr{Ha}^{\mu}(\mc A^{\mu}) = \mr{Ha}^{\mu}(\mc A^{\mu}/L_i) $.  In particular $$\mr U_{p,i}(\mr{Ha}^{\mu}f)=\mr{Ha}^{\mu} \mr U_{p,i}(f)$$
and moreover  we can normalise $\mr{Ha}^{\mu}$ so that $\mr{Ha}^{\mu} \equiv 1$ on $S^{\mr{tor}}$.
\end{lemma}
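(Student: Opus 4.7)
The plan is to establish the pointwise equality $\mr{Ha}^{\mu}(\mc A^{\mu}) = \mr{Ha}^{\mu}(\mc A^{\mu}/L_i)$ first; the commutation with $\mr U_{p,i}$ and the normalization will then follow by short arguments. For the main equality I would reduce to the partial Hasse invariants $\mr{Ha}^{\mu,\sigma_j}$ from Subsection~\ref{subsec: dieud}, which are defined as the determinant of the rescaled Frobenius $\bigwedge^{a_j}\mr{Fr}^{f}/p^{c_j}$ on a $\sigma_j$-component of $\bigwedge^{a_j}\mr H^1_{\mr{dR}}(\mc A^{\mu})/\mr{Fil}^1$. It then suffices to prove equality of each partial invariant.

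The core of the argument is that the isogeny $\xi \colon \mc A^{\mu} \to \mc A^{\mu}/L_i$ from Subsection~\ref{sec:comparison} is built from data compatible with the slope filtration of Proposition~\ref{Prop:MoonenFil}: thanks to the explicit linear-algebra description of $L_i$ in terms of the adapted bases and the matrices $\beta_i$, the pullback $\xi^{\ast}$ on de Rham cohomology is $\mc O_F$-equivariant, preserves $\mr{Fil}^1$ and commutes with $\mr{Fr}^f$, and on each $\sigma_j$-isotypic piece it differs from an isomorphism precisely by the power of $p$ that the rescaling factor $p^{-c_j}$ was designed to absorb. Computing $\det(\bigwedge^{a_j}\mr{Fr}^f/p^{c_j})$ on source and target therefore produces the same element, and taking the product over $j$ gives $\mr{Ha}^{\mu}(\mc A^{\mu}) = \mr{Ha}^{\mu}(\mc A^{\mu}/L_i)$.

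Once this is available, the commutation $\mr U_{p,i}(\mr{Ha}^{\mu}f) = \mr{Ha}^{\mu}\,\mr U_{p,i}(f)$ is a one-line computation: in Definition~\ref{def:T_i} one simply pulls the factor $\mr{Ha}^{\mu}(\mc A_x/L_x) = \mr{Ha}^{\mu}(\mc A_x)$ out of the defining sum. For the normalization, I would pull $\mr{Ha}^{\mu}$ back to the Igusa tower, where the tautological trivialisations of $\mr{Gr}^{\bullet}(\mc G^{\mu,\circ}[p^{\infty}])$ trivialise $\omega^{\otimes(p^{f}-1)}$ so that $\mr{Ha}^{\mu}$ becomes an honest scalar function. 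By the previous step this function is invariant under all isogeny correspondences $C_i$, and combined with the rigidity of the $\mu$-ordinary Dieudonn\'e module (it depends only on the signature data, as recalled after the definition of $\mc{LT}_i$), the function must be a single constant unit $u$; rescaling $\mr{Ha}^{\mu}$ by $u^{-1}$ achieves $\mr{Ha}^{\mu} \equiv 1$ on $S^{\mr{tor}}$.

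The principal obstacle is the bookkeeping in Step 1: matching the integer $c_j$ in the definition of $\mr{Ha}^{\mu,\sigma_j}$ with the $p$-exponents that $\xi$ introduces on each graded piece of the slope filtration, in both cases (U) and (L). All the ingredients are fixed in the paper by the explicit choice of $\beta_i$ together with the decompositions \eqref{eqn:DecompL}--\eqref{eqn:DecompU}, but verifying the cancellation requires running through the two cases separately and carefully tracking indexing conventions for the embeddings $\sigma_j$ and the signatures $(a_j,b_j)$.
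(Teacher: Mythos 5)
Your route is essentially the paper's: pass to Dieudonn\'e modules, use the explicit lattice description of the quotient by $L_i$ (the sublattice spanned by $p$-power multiples of an adapted basis, with induced $\mr{Fil}^1$) to see that each normalised Frobenius determinant, hence $\mr{Ha}^{\mu}$, is unchanged, then get the $\mr U_{p,i}$-commutation formally and the normalisation $\mr{Ha}^{\mu}\equiv 1$ from constancy on the $\mu$-ordinary locus (the paper states this constancy directly; your detour through the Igusa-tower trivialisation and rigidity is the same idea made explicit). One caveat on your Step 1: the cancellation is \emph{not} a matching between $c_j$ and the $p$-exponents introduced by $\xi$ --- the same normalisation $p^{c_j}$ is used for both $\mc A^{\mu}$ and $\mc A^{\mu}/L_i$, and the exponents coming from $L_i$ cancel among themselves because the Frobenius matrix on the sublattice is a diagonal ($p$-power) conjugate of the original one, so its determinant modulo the induced $\mr{Fil}^1$ is unaffected; as you framed it (``matching $c_j$ with the exponents of $\xi$''), the bookkeeping would not close up, whereas the conjugation observation makes it a one-line check, which is exactly the paper's argument.
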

\begin{proof}
Using the Dieudonn\'e functor, we see that the quotient $\mc A^{\mu} \rightarrow \mc A^{\mu}/L_i $ corresponds in case (U) to the sub-lattice
\[
M'\colonequals \langle x_1,\ldots,  x_i, px_{i+1},\ldots, px_{n-i}, p^2 x_{n-i+1},\dots, p^2 x_{n} \rangle
\]
of $M\colonequals \langle x_1,\ldots,x_{n} \rangle$. By the definition of the Hasse invariant, we are calculating the determinant of ${\bigwedge^{a_i} \mr{Fr}^{f}}$, suitably normalised. Moreover the filtration on $M'$ is induced by the one of $M$, hence the determinant is the same. This tells us also that $\mr{Ha}^{\mu} $ is constant on $S^{\mr{tor}}$. The same holds for case (L). 
\end{proof}

\subsection{\texorpdfstring{Hecke operators at $p$ without level at $p$}{Hecke operators at p without level at p}}

Let $X$ be the compact Shimura variety of level $\mc H$. Let $\beta_i$ the matrix associated with $\mr U_{p,i}$ in the previous section. 
\begin{defi} 
We define the Hecke operator $\mr T_{p,i}$ on $\mr M_{\kappa}(\mc H, R)$ using the double quotient $[G(\m Z_p) \beta_i G(\m Z_p) ]$.
\end{defi} 
We are interested in this opersator as if  $\kappa \gg 0$ then we have, by the same argument of \cite[pp.~467-468]{hida_smf}, that 
\begin{align}\label{eq:TpUp}
\mr T_{p,i} \equiv \mr{U}_{p,i} \bmod p .
\end{align}
\subsection{\texorpdfstring{Prime-to-$p$ Hecke operators}{Prime-to-p Hecke operators}}
We define now the Hecke algebra. Let $N$ be a prime-to-$p$ integer containing all prime numbers which are norms of prime ramified in $F$ or for which $G$ is not quasi-split and let $\mathfrak{l}$ be a prime ideal of $F_0$ above $l \nmid pN$, we define
\begin{align*}
\m T_{G,\mathfrak{l}} = \Z_p[ G(\mc O_{F_{0,\mathfrak{l}}}) \setminus G(F_{0,\mathfrak{l}})/G(\mc O_{F_{0,\mathfrak{l}}})  ].
\end{align*}
If $\mathfrak{l}$ is split in $F$, then $G(F_{0,\mathfrak{l}})\cong \GL_{a+b}(F_{0,\mathfrak{l}})\times \m G_m(\Q_l) $ and $\m T_{G,\mathfrak{l}}$ is generated by the same matrices. If $\mathfrak{l}$ is inert, $G(F_{0,\mathfrak{l}})$ is contained in $\GL_{a+b}(F_{\mathfrak{l}})$ and generated by the same diagonal matrices.

The abstract Hecke algebra of prime-to-$Np$ level is then
\begin{align*}
 \m T_G^{(Np)} = {\bigotimes_{\mathfrak{l} \nmid Np}}\!\!' \, \m T_{G,\mathfrak{l}}.
\end{align*}
It acts naturally on the space of $p$-adic modular forms. 

\section{Hida theory}\label{sec:Hidatheory}
In this section we do not suppose that $p$ is inert in $F_0$. The $p$-torsion of $\mc A^{\mu}$ decompose accordingly to the decomposition of $p$ in $F_0$, and for each  $\pi_i$ dividing $p$ we can define an Igusa tower $\mr{Ig}_{\pi_i}$, a weight space $\mc W_{\pi_i}$ etc. as in Section \ref{sec:Igusatower}, and we denote by $$\mr{Ig}= \mr{Ig}_{\pi_1} \times_{S^{\mr{tor}}} \mr{Ig}_{\pi_2} \times \cdots \times_{S^{\mr{tor}}}\mr{Ig}_{\pi_k},$$ and the same for  $\mc W^\circ$, $\mc W$, $\Lambda^{\circ}$, and $\Lambda$. We denote by $L$ the Galois group of $\mr{Ig} \rightarrow S_{\infty}^{\mr{tor}}$.

We also have the two projectors $e_{G,\pi_i}$ and $e_{\mr{GL},\pi_i}$. We set 
\begin{align*}
e_{G}=\prod_{\pi_i \mid p \text{ in }F_0}e_{G,\pi_i}, \;\;\;\; e_{\mr{GL}}=\prod_{\pi_i \mid p \text{ in }F_0} e_{\mr{GL},\pi_i}.
\end{align*}
We are almost ready to develop Hida theory but first we need some propositions. Recall that $D$ is the divisor of $X^{\mr{tor}} \setminus X$ and $\pi: X^{\mr{tor}} \rightarrow X^* $.

\begin{prop}\label{prop:sesmodp}
We have a short exact sequence: 
\begin{align*}
 0 \rightarrow \mr H^0(S^*, \pi_*(\omega^{\kappa}(-D))) \stackrel{\times p^m}{\rightarrow} \mr H^0(S^*, \pi_*(\omega^{\kappa}(-D)) \rightarrow \mr H^0 (S^*_m,\pi_*(\omega^{\kappa}(-D))) \rightarrow 0.
\end{align*}
If $t \in \mathbb{N}$ is big enough, we also have 
\begin{align*}
\mr H^0(X^*,\pi_*\omega^{\kappa+t(1,\ldots,1)}(-D)) \rightarrow \mr H^0(X_1^*,\pi_*\omega^{\kappa+t(1,\ldots,1)}(-D)).
\end{align*}
\end{prop}
\begin{proof}
We first check that we have a short exact sequence 
$$ 0 \rightarrow \pi_*(\omega^{\kappa}(-D)) \stackrel{\times p^m}{\rightarrow}  \pi_*(\omega^{\kappa}(-D)\otimes \mc O_K/ p^m  \rightarrow 0.$$
We use the $q$-expansion as described in \cite[Proposition 1.3]{BrasRos} at a point $x \in X^*$:  
\begin{align*}
\prod_{h} {\Homol^0(\mc Z,\mc L(h)\otimes \omega^k(-D))}^{\G(h)},
\end{align*} 
where $h$ runs over the element of maximal rank in the abelian part of the unipotent for $G$,  $\mc Z$ is isogenous to a power of a universal abelian variety whose dimension depends on $x$ and $\G(h)$ is the stabiliser of $h$ inside $\mc H$.   As we have chosen a neat level for $X$ and $h$ has maximal rank, then all stabilisers  $\G(h)$ are trivial. We are left to use the exact sequence of sheaves
\begin{align*}
0 \rightarrow \mc L(h)\otimes \omega^k(-D) \stackrel{\times p^m}{\rightarrow} \mr \mc L(h)\otimes \omega^k(-D) \rightarrow \mc L(h)\otimes \omega^k(-D)_m \rightarrow 0.
\end{align*}
Using Mumford vanishing theorem \cite[\S III.16]{Mumford} for the $\mr H^1$ of $\mc L(h)$ (remember that $h$ has maximal rank) we can conclude.

For the first part, we use the fact that $R^1\pi_* \mc O_{X^{\mr{tor}}(-D)}=0$ and that $S^*$ is affine.
For the second part, we need to check that $H^1(X_1^*,\pi_*\omega^{\kappa}(-D))$ vanishes. We can write 
$$ \pi_*\omega^{\kappa+t(1,\ldots,1)}(-D) \cong \pi_*(\omega^{\kappa}(-D))  \otimes {\omega^\star}^{\otimes t},$$ 
where $\omega^\star$ is the ample line bundle $\mc O(1)$ on the minimal compactification \cite[Theorem 7.2.4.1 (2)]{lan}. If $t$ is big enough by Serre's vanishing theorem we get 
\[
\mr H^1(X_1^*,\pi_*\omega^{\kappa+t(1,\ldots,1)}(-D))=0.
\]
\end{proof}
The proposition can fail whenever one consider non-cuspidal forms. This is because there are some restrictive conditions on the weight to have non-cuspidal forms \cite[Theorem 1.12]{BrasRos} in characteristic zero, while the same is not true modulo $p$ (imagine a weight which is not parallel but it becomes modulo $p$). Nevertheless, one can hope to develop Hida theory after restricting to weights for which there is no {\it a priori} obstruction to the existence of Eisenstein series. The interested reader is referred to \cite{LiuRos} where this strategy has been exploited to construct non-cuspidal Hida families of Siegel modular forms. 
\begin{prop}\label{prop:zerooutside}
If $\kappa$ is big enough (meaning $\kappa_{\sigma_j,b_j} >C$ and $\kappa_{\sigma_j,a_j} >C$ for all and $C$ depending only on the Shimura datum), then for all $f \in \mr H^0(X^{\mr{tor}},\omega^{\kappa})$ we have
\[
\mr \prod_i \mr U_{p,i} f (x) \equiv 0 \bmod p
\]
for all $x \in X^{\mr{tor}} \setminus S$.
\end{prop}
\begin{proof}
We mimic the proof for the Siegel case, as given in \cite[Proposition A.5]{PilHida}. Suppose not all the $a_j$'s are equal (otherwise we have a non-empty ordinary locus). Let $\mr{NP}$ be the $\mu$-ordinary Newton polygon and $\mr{NP}_j$ the Newton polygon which is above $N$ only at the point from  $a_j$ to $a_{j'}$ (where $a_j =\cdots = a_{j'}$) (like in \cite[Figure 7]{Valentinhasse}, but we admit that more consecutive slopes could be the same). 

Let $\mc N_j$ be the corresponding Newton stratum in the open Shimura variety and $x$ any point in it. The corresponding $p$-divisible group $\mc A_x[p^{\infty}]$ admits a filtration $\left\{\mc A_l\right\} $ for $l=0,\ldots,j-1,j'+1,f+1$. 

As the Newton polygon $\mr{NP}_j$ is above $\mr{NP}$, the slopes on $H\colonequals \mc A_{j'+1}/\mc A_{j-1}$ are bigger then the ones of the $j'+1$ filtered piece of $\mc A^{\mu}$. We need to understand (a bit) what is the matrix $M$ of first filtered piece of the Dieudonn\'e module of $\mc A_x / L$ w.r.t the first filtration of the Dieudonn\'e module of $\mc A_x$, for a subgroup that appear in the correspondence $C_{i}$ for a certain $i$ such that $L  \cap H \neq 0$.  In this case the matrix $M$ is gonna differ, at places $\sigma_k$ with $j \leq k \leq j' $ from the $k$-th component of $ \beta_i$, the matrix defined in Section 
\ref{sec:comparison} at least in one entry, where a $1$ is substituted by a $p^{\eps}$, with $0 <\eps <1$ (this number is related to the $\sigma_k$ degree of $H$, in the sense of Fargues \cite{fargues_can}). 

A similar calculation to the one that led to the choice of the number $N_i$ in \cite[Definition 2.4, 2.9]{Bijamu} tells us that the image of the map $p^*(\kappa)$ is gonna be contained inside $p^{M_i(x)} e^*\Omega^1{\mc A_x/ L/W(k)}$ where 
\begin{align*}
M_i(x) = N_i + n_i + \sum_k \eps_k \kappa_{\sigma_k, a_k},
\end{align*}
with at least one $\eps_k >0$. If $\kappa_{\sigma_k, a_k} > 1/\eps_k$ then $M_i -N_i -n_i \geq 1$, {\it i.e.} $p \mid \mr U_{p,i}f (x)$. As the reunion of $\mc N_j$ over the $j's$ is dense in $X \setminus S$ \cite[Theorem 1.1]{hamacher} we can conclude. 
\end{proof}

We conclude with the following very important theorem.
\begin{teo}\label{thm:boundedness}
We have that
\[
\dim_{\mc O_K/p}({\mr H^0(X^{\mr{tor}}_1,\omega^{\kappa}(-D))}^{\mr{ord}})  \mbox{ and } \rk_{\mc O_K}({\mr H^0(X^{\mr{tor}},\omega^{\kappa}(-D))}^{\mr{ord}})
\]
are bounded independently of $\kappa$.
\end{teo}
\begin{proof}
First note that if $f$ is ordinary then $\mr{Ha}^{\mu}f$ is ordinary too by Lemma \ref{lemma:commute}. If the weight is big enough then by Proposition \ref{prop:zerooutside} $\mr U_p f$ vanishes outside the ordinary locus, and so does $f$; thank to Theorem \ref{teo:appendix} we can choose $\mr{Ha}^{\mu}$ so that it has simple zeros and hence we can divide $f$ by $\mr{Ha}^{\mu}$.
Multiplication by powers of the Hasse invariant induces 
\begin{align*}
{\mr H^0(X^{\mr{tor}}_1,\omega_1^{\kappa})}^{\mr{ord}} \stackrel{\cong}{\rightarrow} {\mr H^0(X^{\mr{tor}}_1,\omega_1^{\kappa'})}^{\mr{ord}}.
\end{align*}
This implies that the space of ordinary modular forms modulo $p$ is bounded independently of $\kappa$.  Moreover, as $\kappa$ is such that the congruence \ref{eq:TpUp}  holds, then we can write (where the reunion ranges over a finite number of possible different reduced Hasse invariants):
\begin{align*}
{\mr H^0(S^{\mr{tor}}_1,\omega^{\kappa}(-D))}^{\mr{ord}} & \stackrel{\cong}{\rightarrow} {\left( \bigcup_{r} \frac{\mr H^0(X^{\mr{tor}}_1,\omega^{\kappa'+r w(\mr{Ha}^{\mu})}(-D))}{{\mr{Ha}^{\mu}}^r } \right)}^{\mr{ord}} \\
& \stackrel{\cong}{\rightarrow} {\mr H^0(X^{\mr{tor}}_1,\omega_1^{\kappa}(-D))}^{\mr{ord}}. 
\end{align*}
If $\kappa$ is not big enough, we can use multiplication by a Hasse invariant of parallel weight (such as the one of \cite{WushiMH}) to inject the forms of weight $\kappa$ into the ones of weight $\kappa'$, with $\kappa'$ big enough. Hence we have the first statement of the theorem.

Using the surjectivity of the reduction modulo $p$ for $p$-adic cusp forms given by Proposition \ref{prop:sesmodp}, one sees that the the statement of the theorem holds for ${\mr H^0(X^{\mr{tor}},\omega^{\kappa}(-D))}^{\mr{ord}} $ too.
\end{proof}
\begin{rmk}
Note that this is the only point where we need that the Hasse invariant has simple zeroes.
\end{rmk}
We  now define a space of $p$-adic modular forms we can control:
\begin{align*}
V^{\mr{cusp}} \colonequals \tilde{V}^{\mr{cusp}} \otimes_{\mc O\llbracket T_{\mr{Ig}} \rrbracket} \Lambda,
\end{align*}
for $\tilde{V}^{\mr{cusp}}$ the subset of cuspidal sections of $\tilde{V}$ and $T_{\mr{Ig}}$ the torus of the Galois group for Igusa tower.
\begin{prop}
The ordinary projector $e_{\mr{GL}}$ and $e_G$ are well-defined on $V^{\mr{cusp}}$.
\end{prop}
\begin{proof}
The only thing to check is that for every element $f$ of $V^{\mr{cusp}}$ the span of $\mb T^n_i f$ is of finite rank. But by definition $f$ is the reduction modulo $p^m$ of a classical forms of weight $\kappa$ and level $\G_1(p^l)$ for which this is clear. This show that $e_G$ converges and the same is true for $e_{\mr{GL}}$ (because it is contained in $e_G$).
\end{proof}
We say that a character $\kappa \in \mc W$ is algebraic if it comes via the Hodge--Tate map of Section~ \ref{sec:HodgeTate}  from an algebraic weight for classical modular forms as in Section~\ref{sec:classicalforms}.
Similarly we say that $\kappa$ is dominant (resp. regular, big enough, very regular) if the corresponding classical weight is dominant (resp. regular, big enough, very regular).

We can now apply Hida machinery and we get the main theorem.
\begin{teo}\label{thm:main}
We have constructed:
\begin{enumerate}
\item An  ordinary projector $e_G=e_G^2$  on $V^{\mr{cusp}}$ such that the Pontryagin dual of its ordinary part 
\begin{equation*}
V^{\mr{cusp},*,\mr{ord}}\colonequals \mr{Hom}_{\m Z_p}\left(V^{\mr{cusp},\mr{ord}},\m Q_p/\m Z_p\right)
\end{equation*}
(which is naturally a $\Lambda$-module) is finite free over $\Lambda^{\circ}$.
\item The $\Lambda$-module of Hida families $\mc S \colonequals \mr{Hom}_{\Lambda}\left(V^{\mr{cusp},*,\mr{ord}},\Lambda\right)$, which is of finite type over $\Lambda$.
\item  Given an algebraic weight $\kappa$, let $\mc P_{\kappa}$ be the corresponding prime ideal of $\Lambda$. Then
\begin{equation*}
   \mc S\otimes  \Lambda/\mc P_{\kappa}\stackrel{\sim}{\longrightarrow}\varprojlim_m\varinjlim_l V^{\mr{cusp}, \mr{ord}}_{m,l}[\tau],
\end{equation*}
and, if $\kappa$ is very regular, combining it with \eqref{eqn:classicaltopadic} gives
\begin{equation*}
   {\mr S_{\kappa}(\mc H, K)}^{\mr{ord}} \stackrel{\sim}{\longrightarrow} \left(\mc S \otimes\Lambda /\mc P_{\kappa}\right)[1/p].
\end{equation*}
Here the maps are equivariant under the action of the unramified Hecke algebra away from $Np$ and the $\m U_p$-operators.
\end{enumerate}
\end{teo}
\begin{proof}
We need to check that all the hypotheses of \cite{hida_control} are satisfied. (Hyp1) is verified by Proposition \ref{prop:sesmodp} and (Hyp2) is satisfied as the points $P_{\kappa}$ for $\kappa$ very regular are dense, as  seen in Proposition \ref{prop: properties of Lambda}; hence we can define the projectors $e_G$ and $e_{\mr{GL}}$. Moreover (C) holds by Lemma \ref{lemma:commute} and as already seen this implies that (F) holds  (see Theorem \ref{thm:boundedness}). 

We show that the last map is an isomorphism, {\it i.e.}\ the control theorem. 
We have the diagram 
 $$\xymatrixcolsep{3pc}\xymatrix{
  R[\kappa] \ar[r]\ar^{l_{\mr{can}}}[d] & F[\kappa]\ar[d]^{l_{\mr{can}}}\\
	{{W(k)}}[\kappa] \ar[r]& {{W(k)}}[\kappa] ,
}$$
where $R[\kappa]$ is the algebraic representation corresponding to $\omega^{\kappa}$, $F[\kappa]$ is the topological representation  obtained restricting  $R[\kappa]$ to $\mr{Gal}(\mr{Ig}/S^{\mr{tor}})$, and ${{W(k)}}[\kappa]$ is a one dimensional module where $T_{\mr{Ig}}$ acts via $\kappa$. Now, as already said \cite[Proposition 3.3]{PilHida}, whenever $\kappa$ is very regular the projector $e_{\mr{GL}}$ makes the map $l_{\mr{can}}$ into an isomorphism, and the same on the left (in which case it is enough for the weight to be dominant). 
This tells us that a function on $\mr{Ig}$ of weight $\kappa$ descends to a section of $\omega^{
\kappa}$ on $S^{\mr{tor}}_{\infty}$ whenever $\kappa$ is very regular.
As very regular implies big enough then congruence \ref{eq:TpUp}  holds and and we can write, as in Thereom \ref{thm:boundedness},
\begin{align*}
{\mr H^0(S^{\mr{tor}}_{\infty},\omega^{\kappa}(-D))}^{\mr{ord}} & \stackrel{\cong}{\rightarrow} { \left( \bigcup_{r} \frac{\mr H^0(X^{\mr{tor}},\omega^{\kappa'+r w(\mr{Ha}^{\mu})}(-D))}{{\mr{Ha}^{\mu}}^r } \right)}^{\mr{ord}} \\
& \stackrel{\cong}{\rightarrow} {\mr H^0(X^{\mr{tor}},\omega^{\kappa}(-D))}^{\mr{ord}}
\end{align*}
 and we are done.

We are left to show that $\mc S$ is free over $\Lambda^{\circ}$.
We proceed as in the proof of   \cite[Th\'eor\`eme 7.1]{PilHida}. 
Choose a point $\kappa$ and let $e_1,\ldots,e_l$ elements of $\mc S_{\chi} \colonequals \mc S \otimes_{\Lambda,\chi} \Lambda^{\circ}$ (for $\chi$ a finite order character of $T_{\mr{Ig}}$) which reduce to a basis of ${\mr S_{\kappa}(\mc H, \mc O)}^{\mr{ord}}$. 
We want to show that
$$\delta: \bigoplus \Lambda^{\circ} e_i \rightarrow \mc S_{\chi} $$
is an isomorphism. For $\kappa, \kappa'$ very regular, we have just seen that 
$${\mr S_{\kappa}(\mc H, \mc O_K)}^{\mr{ord}} \cong  {\mr S_{\kappa'}(\mc H, \mc O_K)}^{\mr{ord}}$$
and moreover by the proof of Theorem \ref{thm:boundedness}
$${\mr S_{\kappa}(\mc H, \mc O_K/p)}^{\mr{ord}} =  {\mr S_{\kappa'}(\mc H, \mc O_K/p)}^{\mr{ord}}.$$
Hence the kernel and cokernel of $\delta$ vanish modulo $P_{\kappa}$ for all such $\kappa$. By Zariski density, they must be $0$.
\end{proof}

\begin{rmk}
A priori, the same results hold if one use $\mc O_K \llbracket T_{\rm{Ig}} \rrbracket$ and $\mc O_K \llbracket x_1,\ldots,x_{N} \rrbracket$ (for $N$ the $\Z_p$-rank of $T_{\rm{Ig}}$) in place of $\Lambda $ and $\Lambda^{\circ}$, as the points $P_{\kappa}$ of part (3) of the theorem are Zariski dense. Still, we find more interesting to have modules over $\Lambda$, as we feel that this ring is adapted to the classical weights. Moreover $\Lambda$ (or better the space $\mc W_{\mc LT_{f-1}}$) is used in $p$-adic Hodge theory for $\Z_{p^f}$, see \cite{BSX}.
\end{rmk}
\appendix
\section{\texorpdfstring{Reduced $\mu$-ordinary Hasse invariants}{Reduced mu-ordinary Hasse invariants}}\label{Appendix}
In \cite[Proposition 9.17]{Valentinhasse} Hernandez shows that in certain cases (namely, when the ordinary locus is not empty or when the $a_i$ are all distinct) his Hasse invariants are reduced. This is not always the case as in Remark 9.23 of {\it loc. cit.} he gives examples when they are not reduced. 
Let us recall the example: suppose that there is only one prime above $p$ in $F$ and suppose that $a_j=a_{j+1}$. Then we have \begin{align*}
 \mr{Fr}^{1}: M_{\sigma_j} \stackrel{\sim}{\longrightarrow}  M_{\sigma_{j+1}}, \\
 \mr{Fr}^{f-1}: M_{\sigma_{j+1}} \stackrel{\sim}{\longrightarrow}  M_{\sigma_j}.
\end{align*} If we denote by $\xi$ (resp. $\eta$) the determinant of $\bigwedge^{a_j}\mr{Fr}^{\alpha}/p^{c_j}$ (resp.  $\bigwedge^{a_j}\mr{Fr}^{f-\alpha}/p^{c_j}$) modulo $\mr{Fil}^1$, then 
\[
\mr{Ha}^{\mu,\sigma_j}= \xi \otimes \eta^{p}, \;\;\;\;  \mr{Ha}^{\mu,\sigma_{j+1}} =\eta \otimes \xi^{p^{f-1}}.
\]

The idea is to consider separately $\xi$ and $\eta$ (or more of such if more consecutive $a_j$'s are equal) to have partial Hasse invariants {\it \`a la} Goren \cite{Goren}.

We suppose for an instant that $p$ is inert in $F_0$ and let $\mc O$ be $F_p$ in case (U) and $F_{0,p}$ in case (L). We denote by $f$ the degree of $\mc O$. 
If we are in case (U), recall that we defined $a_{f/2 + i}=b_{f/2+1 -i}$.

For all $1\leq j \leq f$, let $\alpha \geq 0 $ be such that $a_j=\ldots=a_{j+\alpha}$, we define 
\begin{align*}
\mr{Ha}^{\mu,\sigma_{j+\beta}} \colonequals & \frac{\bigwedge^{d_j} \mr{Fr}}{p^{c_{\beta}}} : \frac{\bigwedge^{d_j}  M_{\sigma_{j+\beta}}}{\mr{Fil}^1 (\bigwedge^{d_j}  M_{\sigma_{j+\beta}})}
\rightarrow  \frac{\bigwedge^{d_j}   M_{\sigma_{j+\beta+1}}}{\mr{Fil}^1(\bigwedge^{d_j}   M_{\sigma_{j+\beta+1}} )} \mbox{ for }  0\leq \beta \leq \alpha-1;\\
\mr{Ha}^{\mu,\sigma_{j+\alpha}}\colonequals & \frac{\bigwedge^{d_j} \mr{Fr}^{f-\alpha}}{p^{c_{\beta}}} : \frac{\bigwedge^{d_j}  M_{\sigma_{j+\alpha}}}{\mr{Fil}^1 (\bigwedge^{d_j}  M_{\sigma_{j+\alpha}})}
\rightarrow  \frac{\bigwedge^{d_j}   M_{\sigma_{j}}}{\mr{Fil}^1(\bigwedge^{d_j}   M_{\sigma_{j}} )}
\end{align*}
for $d_j = \mr{dim} (\mr H^1_{\mr {dR}}(\mc A^{\mu})_{\sigma_{j}} / \mr{Fil}^1 (\mr H^1_{\mr {dR}}(\mc A^{\mu})_{\sigma_{j}})) = n - a_j$ and $c_j = \sum_{k=1}^{j-1} k (a_{k+1}-a_k)$.

Note that $M$ is isomorphic to its Cartier dual, hence we get $M \cong {\mc D (M)}^{\mr{Fr}^{f/2}}$ (meaning the underling $\mc O$-module is the same but the action of $\sigma$ on one side corresponds to the action of $\bar{\sigma}$ on the other). 

Because of this we get \begin{align*}
\frac{\bigwedge^{d_j}  M_{\sigma_{j+\beta}}}{\mr{Fil}^1 (\bigwedge^{d_j}  M_{\sigma_{j+\beta}})} & \cong \frac{\bigwedge^{d_j}  \mc D(M)^{\mr{Fr}^{f/2}}_{\sigma_{j+\beta}}}{\mr{Fil}^1 (\bigwedge^{d_j}  M_{\sigma_{j+\beta}})} \\
& \cong \frac{\bigwedge^{d_j}  \mc D(M)_{\sigma_{f -(j+\beta)+1}}}{\mr{Fil}^1 (\bigwedge^{d_j}  M_{\sigma_{f -(j+\beta)+1}})} 
\end{align*} 
and also $$ \mr{Fil}^1 M_{\sigma_{i}} \cong M_{\sigma_{f-i+1}}/\mr{Fil}^1 M_{\sigma_{f-i+1}} $$ 
This implies $\mr{Ha}^{\mu,\sigma_{j+\beta}} = \mr{Ha}^{\mu,\bar{\sigma}_{j+\beta}} $.

In case (L) instead we split $M=M^+\oplus M^-$ and we repeat the same definitions with $M^+$ in place of $M$, {\it i.e.} we consider only the Dieudonn\'e module of $\mc A^{\mu}[(\pi^+)^{\infty}]$.

\begin{defi}
For all primes $\pi_i$ we define $\mr{Ha}_{\pi_i}^{\mu}= \prod_{i=1}^{d}\mr{Ha}^{\mu,\sigma_{i}}$ and in general $$\mr{Ha}^{\mu}\colonequals \prod_{i=1}^k \mr{Ha}^{\mu}_{\pi_i}.$$
\end{defi}

\begin{teo}\label{teo:appendix}
The divisor $V(\mr{Ha}^{\mu})$ in $S_1^{*}$ is reduced.
\end{teo}
\begin{proof}
It is enough to show that the same holds for $S_1$. Moreover, as everything splits according to the action of $\mc O_F \otimes \m Z_p$, it is not harmful to work with a BT-with $\mc O$-structure, for $\mc O$ any of $\mc O_{F,\pi_i}$ (if $\pi_i$ is inert in $F/F_0$) or $\mc O_{F_0,\pi_i}$ (if $\pi_i$ splits).

As in Proposition \ref{prop:zerooutside} we consider a Newton stratum $\mc N_j$ of abelian varieties whose Newton polygon $\mr{NP}_j$ coincides with the $\mu$-ordinary Newton polygon $\mr{NP}$ except at the breaking point $a_j$ (and the dual point $b_j$).  First of all notice that if $a_i \neq a_j$ then  $\mr{Ha}^{\mu,\sigma_{i}}$ doesn't distinguish between $\mr{NP}_j$ and $\mr{NP}$. So we have to analyse only $\mr{Ha}^{\mu,\sigma_{i}}$ for $j \leq i \leq j+\alpha$ with $\alpha \geq 0$ and $a_j=\ldots=a_{j+\alpha}$.
Only for this theorem, we let $H\colonequals\mc A_{j+\alpha+1}/\mc A_{j-1}$, for $\set{\mc A_k}$ the filtration for the $p$-torsion of the universal abelian variety over $\mc N_j$. It is a BT with $\mc O$-structure of $\mc O$-height $2$. We follow very closely the proof of \cite[Proposition 9.17]{Valentinhasse}: we need to study the matrices of the Frobenius on the Dieudonn\'e module $M_H$ of $H$. We let $\set{ e_{1,i},e_{2,i}}_{i=1,\ldots,f}$ be a basis of $M_{H}$ such that $\mc O$ acts via $\sigma_i$ on $\set{ e_{1,i},e_{2,i}}$. We have  
\begin{align*}
\mr{rk}_{\mc O} \mr{Fil}^{1}M_{H,\sigma_i} = \left\{ \begin{array}{cc}
0 & \mbox{ if } i < j\\
1 & \mbox{ if } j \leq i \leq j+\alpha\\
2 & \mbox{ if } i > j+\alpha
\end{array} \right. .
\end{align*}

By our choice of $\mr{NP}_j$ we know that the   two slopes of $\mr{Fr}^f$ on $\set{ e_{1,i},e_{2,i}}$ (seen as a $\mr{Fr}^f$ crystal of rank two) are the same and that the dimension of $H$ is $\alpha+1$. The Dieudonn\'e--Manin classification of Dieudonn\'e modules tells us that $H$ is isogenous over ${\m Q}^{\mr{ur}}_p $ to 
\begin{align*}
M_{2,s} = {\m Q}^{\mr{ur}}_p[\Phi]/(\Phi^2 - p^{s}),
\end{align*}
if $s=2(f-j-\alpha)+\alpha+1$ is odd, and 
\begin{align*}
M_{1,s/2} \oplus M_{1,s/2} = {\m Q}^{\mr{ur}}_p[\Phi]/(\Phi - p^{s/2})\oplus {\m Q}^{\mr{ur}}_p[\Phi]/(\Phi - p^{s/2}) ,
\end{align*}
 if $s$ is even. Here $\Phi$ is $\mr{Fr}^f$.


We are interested in the matrix of the isogeny $\mr{Fr}: M_{\sigma_j} \rightarrow M_{\sigma_{j+1}}$. It can only be $\left(  
\begin{array}{cc}
0 & p  \\
1 & 0
\end{array}
\right)$ otherwise the slopes would be distinct.

We know that the deformation space of $H$ has dimension $\alpha+1$ \cite[\S 2.1.4]{Moonen} and thanks to \cite[Proposition 2.1.8]{Moonen} the matrix of $\mr{Fr}$ on the the universal BT group with $\mc O$-structure of type as $H$ is 
\begin{align*}
\left(  
\begin{array}{cc}
X_1 & p  \\
1     & 0
\end{array}
\right) 
\end{align*}
and, similarly, for all $ \beta \leq \alpha-1 $ the universal matrix for 
$\mr{Fr}: M_{\sigma_j+\beta} \rightarrow M_{\sigma_{j+\beta+1}}$ is 
\begin{align*}
\left(  
\begin{array}{cc}
X_{\beta+1}& p \\
1 & 0
\end{array}
\right).
\end{align*}
In particular if $\beta \neq \alpha$ we have 

\begin{gather*}
\mr{Fr} \colon  M_{H,\sigma_{j+\beta}}/\mr{Fil}^{1}(M_{H,\sigma_{j+\beta}}) \to  M_{H,\sigma_{j+\beta+1}}/\mr{Fil}^{1}(M_{H,\sigma_{j+\beta+1}}) \\
 e_{1,j+\beta} \mapsto (X_{\beta+1}e_{1,j+\beta+1} + e_{2,j+\beta+1})/e_{2,j+\beta+1} = X_{\beta+1}e_{1,j+\beta+1}
\end{gather*}
So these $\mr{Ha}^{\mu,\sigma_{j+\beta}}$ are non-zero on the tangent space at a point in $\mc N_j$, and their images are all independent. For $\mr{Ha}^{\mu,\sigma_{j+\alpha}}$ a similar calculation (here we have to take a power of $\mr{Fr}$ and divide by a power of $p$, and note that the remaining $M_{H,\sigma_i}$ do not contribute to the tangent space as they are ``rigid'') gives that on the tangent space $\mr{Ha}^{\mu,\sigma_{j+\alpha}}$ is $X_{\alpha+1}$. In particular the product is a non-zero linear form on the tangent space. 
\end{proof}

\begin{rmk}
Note that Lemma \ref{lemma:commute} is not affected if one uses this new Hasse invariant as the quotient of the lattice of $\mc A$ by the one of $\mc A /L_i$ is free over $\mc O/p \otimes_{\m F_p}\mc O/p$.
\end{rmk}

\begin{rmk}The Hasse invariant just defined is a product of  modular forms of non-parallel weight $(0,\ldots,-1,p,0,\dots)$ or $ (0,\ldots,p^{f-\alpha}, 0,\ldots, 0, -1, 0,\ldots)$ and it is non-cuspidal modulo $p$ so in general one can not expect to lift (not even a power of it) to characteristic zero.
\end{rmk}

\bibliographystyle{amsalpha}
\bibliography{biblio}
\end{document}